
\documentclass[final]{siamltex}
%added by u
\usepackage{graphicx}
\usepackage{leftidx}
\usepackage{mathrsfs}
\usepackage{amssymb}
\usepackage{amsmath}
\usepackage{cite}
\usepackage{float}
\usepackage{multirow}
\usepackage{color}

% definitions used by included articles, reproduced here for
% educational benefit, and to minimize alterations needed to be made
% in developing this sample file.

% added by xuan zhao
\def\af{\alpha}

 \usepackage{leftidx}

\title{SUPERCONVERGENCE POINTS FOR THE SPECTRAL INTERPOLATION OF RIESZ FRACTIONAL DERIVATIVES\thanks{This work is supported in part by the National Natural Science Foundation of China under grants NSFC 11471031, NSFC 91430216, and NSAF U1530401; the US National Science Foundation through grant DMS-1419040, the Natural Science Youth Foundation of Jiangsu Province of China (No.SBK20160660); and the Fundamental Research Funds for the Central Universities of China (No.2242016K41029).} }  % !!! Funding here

\author{Beichuan Deng\thanks{Department of Mathematics, Wayne State University, Detroit, MI 48202, USA. (\tt beichuan.deng@wayne.edu).}
\and Zhimin Zhang\thanks{Corresponding author. Beijing Computational Science Research Center, Beijing 100193, China ({\tt zmzhang@csrc.ac.cn}); and Department of Mathematics, Wayne State University,  Detroit, MI 48202, USA, (\tt zzhang@math.wayne.edu).}
\and Xuan Zhao\thanks{School of Mathematics, Southeast University, Nanjing 210096, China.
(\tt xuanzhao11@seu.edu.cn).}}

\begin{document}
\maketitle

\begin{abstract}
In this paper, superconvergence points are located for the approximation of the Riesz derivative of order $\alpha$ using classical Lobatto-type polynomials when $\alpha \in (0,1)$ and generalized Jacobi functions (GJF) for arbitrary $\alpha > 0$, respectively. For the former, superconvergence points are zeros of the Riesz fractional derivative of the leading term in the truncated Legendre-Lobatto expansion. It is observed that the convergence rate for different $\af$ at the superconvergence points is at least $O(N^{-2})$ better than the optimal global convergence rate. Furthermore, the interpolation is generalized to the Riesz derivative of  order $\alpha > 1$ with the help of GJF, which deal well with the singularities. The well-posedness, convergence and superconvergence properties are theoretically analyzed. The gain of the convergence rate at the superconvergence points is analyzed to be $O(N^{-(\alpha+3)/2})$ for $\alpha \in (0,1)$ and $O(N^{-2})$ for $\alpha > 1$. Finally, we apply our findings in solving model FDEs and observe that the convergence rates are indeed  much better at the predicted superconvergence points.
\end{abstract}

\begin{keywords}
Superconvergence, Riesz fractional derivative, spectral interpolation, generalized Jacobi functions
\end{keywords}

\begin{AMS}
65N35, 65M15, 26A33, 41A05, 41A10
% 65N35  	Spectral, collocation and related methods
% 	26A33  	Fractional derivatives and integrals
% 	65M15  	Error bounds
% 65E05  	Numerical methods in complex analysis
% 65M70  	Spectral, collocation and related method
% 41A05  	Interpolation
% 	65D05  	Interpolation
% 41A10  	Approximation by polynomials
% 41A25  	Rate of convergence, degree of approximation
%
\end{AMS}

\pagestyle{myheadings}
\thispagestyle{plain}
\markboth{ }{Superconvergent points for Riesz Fractional Derivatives}
\section{Introduction}

%(1) some history, papers about fractional calculus, and riesz fractional derivatives.
%(2) superconvergent phenomenons, regular integer-order derivatives, fractional ordered derivatives.
%(3) organization, sect 2,
%(4) sect 3,
%(5) conclusions, remark, applications of superconvergence

%\zhu{ current superconvergence results including FEM, DG and spectral. the importance of superconvergence and the common phenomenon. }

%Superconvergence of numerical methods...

%\zhu{brief background of fractional, numerics for FDE}

.Over the last two decades,  the theory of fractional differential equations (FDEs) has been extensively studied. Especially, the Riesz fractional derivative, which appears frequently in spatial fractional models (such as various diffusion models), has been widely studied. Some finite-difference based numerical schemes for approximating Riesz fractional derivatives and solving linear or nonlinear Riesz FDEs are presented in \cite{Deng+2012+AM,Shen+2014+IMA, XuanSun+2014+SISC}. There are also works that apply finite element methods to solve Riesz FDEs; some of them concentrate on theoretical analysis, \cite{Bu+2014+JCP,Roop+2006+AM}, and some are mainly about improving algorithms, such as fast algorithms \cite{Lei+2013+JCP,HPang+2012+JCP}.

%\zhu{Indeed}, there are two main difficulties  for solving high dimensional problems utilizing the numerical methods with limited accuracy, namely, large memory capacity for solving time fractional partial differential equations (FPDEs) and huge computational cost for solving space FPDEs.

%\zhu{ spectral method for FPDE.}

{  On the other hand, spectral methods are promising candidates for solving FDEs since their global nature fits well with the nonlocal definition of fractional operators. Using integer-order orthogonal polynomials as basis functions, spectral methods \cite{Li+2012+FCAA,Li+2009+SINUM,Xu+2014+JCP,Zeng+2014+SINUM,Zheng+2015+SISC} help enormously with the alleviation of the memory cost for the discretization of fractional derivatives. The authors of \cite{Chen+2014+ar,Huang+2014+ar,Zayernouri+2013+JCP,Zayernouri+2014+SISC}
 designed suitable bases to deal with singularities, which usually appear in fractional problems. In particular, Mao, Chen, and Shen \cite{Mao+2015+ANM} proposed a spectral Petrov-Galerkin method, which is based on generalized Jacobi functions, for solving Riesz FDEs, and provided rigorous error analysis.

%As its known, superconvergence phenomenon of a certain numerical method means that the convergent rate at some special points is higher than the global rate of the method, where the points are called superconvergent points. The superconvergence phenomenon can be greatly applied to scientific computing, especially on the posteriori error estimates and adaptive algorithms.
In this work, we study the superconvergence phenomenon for some spectral interpolation of the Riesz fractional derivative.
In the literature, superconvergence of the $h$-version finite element method has been well studied and understood, see, e.g.,  \cite{Lin+2006+Bj,LBW+1995+note}, while there have been some relatively recent  superconvergence studies of polynomial spectral interpolation and spectral methods in the case of integer-order derivatives, see, e.g., \cite{Zhang+2008+JSC,Zhang+2012+SINUM,Xie+2012+MACOM,LWang+2014+JSC}.
As for fractional-order derivatives, Zhao and Zhang studied the Riemann-Liouville case recently \cite{Zhao+2016+SISC} and found systematically some superconvergence points in the spirit of an earlier work on integral-order spectral methods \cite{Zhang+2012+SINUM}.

%\zhur{Fatone and Funaro \cite{Fatone+2015+SISC} provide the optimal distributions of collocation nodes of different bases for the approximations of fractional Riemann-Liouville derivatives. }

A major difficulty in the investigation of superconvergence of spectral methods for fractional problems, compared with integer-order derivatives, is the nonlocality of fractional operators and the complicated forms of fractional derivatives.
The second challenge is the construction of a good basis for a spectral scheme. Given a suitable  basis, one can then begin the analysis of the approximation error in order to locate the superconvergence points.

One objective of this work is to consider Lobatto-type polynomial interpolants of a sufficiently smooth function, and identify those points where the values of the Riesz fractional derivative of order $\alpha$ are superconvergent.
%{\color{blue} Here we} only consider the cases when $0<\alpha<1$, since the Riesz derivative is equivalent to two-sided Riemann Liouville derivatives.
Note that $x=\pm1$ are interpolation points for the Legendre-Lobatto interpolation; this fact guarantees that after taking the derivative of order $\alpha\in (0,1)$, the global error doesn't blow up.
%In this part,  is studied.
In this case, superconvergence points are zeros of the Riesz fractional derivative of the corresponding Legendre-Lobatto polynomials. Furthermore, according to a series of numerical experiments, we observe that the convergence rate is at least $O(N^{-2})$ better than the global convergence rate. 

Comparing with the Riemann-Liouville fractional derivative, the main difficulty in studying the Riesz case is to deal with the left and right fractional derivatives simultaneously, see the definition of (2.2)--(2.3).
Fractional derivatives of order $\alpha>1$ create stronger singularities at $x=\pm1$, which restrain the use of Lobatto-type polynomials. To handle the stronger singularity, the  fractional interpolation using generalized Jacobi functions (GJF) is introduced (in Section 4), and its well-posedness, convergence and superconvergence properties are theoretically analyzed. When $0<\alpha<2$, if the given function is GJF-interpolated at zeros of the Jacobi polynomial $P_{N+1}^{\frac{\alpha}{2},\frac{\alpha}{2}}(x)$, then the superconvergence points for a fractional derivative of order $\alpha$ are exactly the interpolation points. Moreover, the convergence rate at the superconvergence points is $O(N^{-\frac{\alpha+3}{2}})$ and $O(N^{-2})$ higher than the global convergence rate for $0<\alpha<1$ and $1<\alpha<2$, respectively.

To demonstrate the usefulness of our discovery of these superconvergence points, we use the GJF as a basis to solve a model fractional differential equation by both Petrov-Galerkin and spectral collocation methods. We observe that convergence rates at the predicted superconvegrence points are indeed much better than the best possible global rates.

The organization of this paper is as follows. In Section 2, definitions and properties of fractional derivatives, Jacobi polynomials, generalized Jacobi functions, and Gegenbauer polynomials are introduced. Section 3 is about the Legendre-Lobatto polynomial interpolation and Section 4 deals with the GJF fractional interpolation, along with numerical examples. Section 5 considers some applications of superconvergence theory. Finally, we draw some conclusions in Section 6.  \\

\section{Preliminaries}
We begin with some basic definitions and properties. Throughout the paper, $\mathbb{Z}^+$ denotes the set of all positive integers, $\mathbb{N}$ denotes the set of all nonnegative integers. $\mathcal{M}_{n}(\mathbb{R})$ denotes the space of all of $n \times n$ matrices defined on real number field.

\subsection{Definitions and Properties of Fractional Derivatives}
First, we recall the definitions and properties of Riesz fractional derivatives.

\begin{definition} Let $\gamma \in (0,1)$, the left and right fractional integral are defined respectively, as follows:
$${_{-1}I_x^{\gamma}}u(x):=\frac{1}{\Gamma(\gamma)} \int_{-1}^x \frac{u(\tau)}{(x-\tau)^{1-\gamma}} d\tau, \ x \in (-1,1], $$
$${_{x}I_1^{\gamma}}u(x):=\frac{1}{\Gamma(\gamma)} \int_{x}^1 \frac{u(\tau)}{(\tau-x)^{1-\gamma}} d\tau, \ \ x \in [-1,1).$$
Then for $\alpha \in (k-1,k)$, where $k \in \mathbb{Z}^{+}$, the left and right Riemann-Liouville derivatives are defined respectively by:
$$_{-1}D_x^{\alpha} u(x) = D^k (_{-1}I_x^{k-\alpha}u(x)),$$
$$_{x}D_1^{\alpha} u(x) = (-1)^k D^k (_xI_1^{k-\alpha}u(x)),$$
where $D^k:=\frac{d^k}{dt^k}$ is the $k$-th (weak) derivative.
\end{definition}

\begin{definition}
Let $\gamma \in (0,1)$, the one dimensional Riesz potentials are defined as follows:
$$I^{\gamma}_o u(x):= \frac{c_1}{\Gamma(\gamma)}\int _{-1}^1 \frac{sign(x-\tau) u(\tau)}{|x-\tau|^{1-\gamma}} d\tau = c_1 (_{-1}I_x^{\gamma}-{_x I_1^{\gamma}})u(x)$$
$$I^{\gamma}_e u(x):= \frac{c_2}{\Gamma(\gamma)}\int _{-1}^1 \frac{u(\tau)}{|x-\tau|^{1-\gamma}} d\tau = c_2 (_{-1}I_x^{\gamma}+{_x I_1^{\gamma}})u(x)$$
where $sign(x)$ is the sign function, $c_1=\frac{1}{2 \sin(\pi\gamma/2)}$, $c_2=\frac{1}{2 \cos(\pi\gamma/2)}$. Then for $\alpha \in (k-1,k)$, we can therefore define the Riesz fractional derivative:
\begin{eqnarray}
^R D^{\alpha}u(x):=
\left\{\begin{array}{ll}
D^k I_o^{k-\alpha}u(x)=c_1 (_{-1}D_x^{\alpha}+ {_{x}D_1^{\alpha}} )u(x), \ k \ is \ odd\\
D^k I_e^{k-\alpha}u(x)=c_2 (_{-1}D_x^{\alpha}+ {_{x}D_1^{\alpha}} )u(x), \ k \ is \ even
\end{array}\right.
\label{def22}
\end{eqnarray}
\end{definition}

\begin{definition}
For any positive real number $\alpha \in (k-1,k)$, we define:
\begin{eqnarray}
^R D^{\alpha}_o u(x):=D^k I_o^{k-\alpha}u(x)=c_1 (_{-1}D_x^{\alpha}+ (-1)^k {_{x}D_1^{\alpha}} )u(x),
\label{def231}
\end{eqnarray}
and
\begin{eqnarray}
^R D^{\alpha}_e u(x):=D^k I_e^{k-\alpha}u(x)=c_2 (_{-1}D_x^{\alpha}+ (-1)^k {_{x}D_1^{\alpha}} )u(x).
\label{def232}
\end{eqnarray}
\end{definition}
Let us recall the Leibniz rule for fractional derivatives.

\begin{lemma}(see \cite{Podlubny+1999}, Chap.2)
Let $\alpha \in \mathbb{R}^+$, $n \in \mathbb{Z}^+$, and $\alpha \in (n-1,n)$. If both $f(x)$ and $g(x)$ along with all their derivatives are continuous in $[-1,1]$, then the Leibniz rule for the left Riemann-Liouville fractional differentiation takes the following form
\begin{eqnarray}
_{-1}D_x^{\alpha}[f(x)g(x)]=\sum_{k=0}^{\infty} \frac{\Gamma(\alpha+1)}{\Gamma(k+1)\Gamma(\alpha-k+1)} (_{-1}D_{x}^{\alpha-k} f(x)) g^{(k)}(x).
\label{lemma24}
\end{eqnarray}
\end{lemma}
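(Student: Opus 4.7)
I would expand $g$ in a Taylor series inside the Riemann--Liouville integral representation, carry the $n$-th derivative through, and reduce the resulting double sum by a Chu--Vandermonde convolution. Fix $n\in\mathbb{Z}^+$ with $\alpha\in(n-1,n)$ and start from
$$
{_{-1}D_x^{\alpha}}[fg](x)=\frac{1}{\Gamma(n-\alpha)}\,\frac{d^n}{dx^n}\int_{-1}^{x}(x-\tau)^{n-\alpha-1}f(\tau)g(\tau)\,d\tau.
$$
Substituting $g(\tau)=\sum_{k\ge 0}\frac{(\tau-x)^k}{k!}g^{(k)}(x)$, interchanging sum and integral, and using the elementary identity $\int_{-1}^{x}(x-\tau)^{n-\alpha+k-1}f(\tau)\,d\tau=\Gamma(n-\alpha+k)\,{_{-1}I_x^{n-\alpha+k}}f(x)$, one obtains
$$
{_{-1}D_x^{\alpha}}[fg](x)=\frac{d^n}{dx^n}\sum_{k=0}^{\infty}\frac{(-1)^k\,\Gamma(n-\alpha+k)}{k!\,\Gamma(n-\alpha)}\,g^{(k)}(x)\,{_{-1}I_x^{n-\alpha+k}}f(x).
$$

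\textbf{Collapsing the double sum.} Next I apply the ordinary (integer-order) Leibniz rule to each product, using $\frac{d^n}{dx^n}\,{_{-1}I_x^{n-\alpha+k}}f={_{-1}D_x^{\alpha-k}}f$ (with negative-order symbols interpreted as fractional integrals when $k>\alpha$). Re-indexing the resulting double series by $m=k+j$, where $j\in\{0,\dots,n\}$ counts how many of the $n$ derivatives fall on the fractional-integral factor, the coefficient of $\bigl({_{-1}D_x^{\alpha-m}}f\bigr)(x)\,g^{(m)}(x)$ becomes a finite sum of gamma-function ratios weighted by $\binom{n}{j}$. The claimed identity then reduces to showing that this inner sum equals $\Gamma(\alpha+1)/[\Gamma(m+1)\Gamma(\alpha-m+1)]$, which follows from the Chu--Vandermonde convolution for generalized binomial coefficients combined with $\binom{-a}{k}=(-1)^k\binom{a+k-1}{k}$.

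\textbf{Main obstacle.} The chief difficulty is rigorously justifying the two interchanges above: summation with the weakly singular integral, and summation with $d^n/dx^n$. Because a merely $C^\infty$ function need not have a convergent Taylor series, the stated hypothesis ``all derivatives continuous'' is in effect used as real analyticity. I would address this by truncating the Taylor expansion at order $K$ with an integral remainder, establishing the formula up to a residual for each finite $K$, and then showing that the residual term together with its first $n$ derivatives vanishes uniformly on compact subsets of $(-1,1)$ as $K\to\infty$, which requires careful bounds on $\max_{[-1,1]}|f^{(j)}|$ and $\max_{[-1,1]}|g^{(j)}|$. The final combinatorial identification is routine once Chu--Vandermonde is invoked in its analytically continued form, so the real work lies in controlling the Taylor remainder after repeated differentiation through the convolution with the singular kernel $(x-\tau)^{n-\alpha-1}$.
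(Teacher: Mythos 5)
The paper gives no proof of this lemma---it is quoted directly from Podlubny (Chap.~2)---and your plan (Taylor-expand $g$ about $x$ inside the Riemann--Liouville integral, integrate term by term to get the fractional-integral Leibniz series, push $d^n/dx^n$ through with the integer-order Leibniz rule, and collapse the double sum via Chu--Vandermonde) is essentially the derivation in that cited source, so your proposal matches the intended proof; your observation that the hypothesis ``all derivatives continuous'' is really used as analyticity of the factor carrying the integer derivatives is likewise consistent with the reference, and is exactly how the lemma is applied in the paper (to $g(x)=1/(z-x)$, which is analytic on $[-1,1]$). The only blemish is a bookkeeping slip: with $m=k+j$ the index $j$ must count the derivatives falling on $g^{(k)}$, not on the fractional-integral factor, since $j$ derivatives on ${}_{-1}I_x^{\,n-\alpha+k}f$ paired with $g^{(k+n-j)}$ gives $m=k+n-j$; either convention yields $\sum_j \binom{n}{j}\binom{\alpha-n}{m-j}=\binom{\alpha}{m}$, so the argument is unaffected.
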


By changing variables, we can derive the Leibniz rule for the right Riemann-Liouville derivative.
\begin{lemma}(see\cite{FIAD+Samko}, Chap.15)
Under the same conditions as Lemma 2.4, the Leibniz rule for the right Riemann-Liouville differentiation takes the following form
\begin{eqnarray}
_xD_1^{\alpha}[f(x)g(x)]=\sum_{k=0}^{\infty} (-1)^k \frac{\Gamma(\alpha+1)}{\Gamma(k+1)\Gamma(\alpha-k+1)} (_xD_1^{\alpha-k} f(x)) g^{(k)}(x).
\label{lemma25}
\end{eqnarray}
\end{lemma}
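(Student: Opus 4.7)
The plan is to derive the right-sided Leibniz rule from Lemma 2.4 via the reflection $y=-x$, which interchanges the left and right Riemann-Liouville operators. Given $u$ on $[-1,1]$, set $\tilde u(y):=u(-y)$. First I would verify, directly from Definition 2.1, the reflection identity
$$_xD_1^\alpha u(x) \;=\; {_{-1}D_y^\alpha}\,\tilde u(y)\big|_{y=-x}.$$
This follows by substituting $\tau=-\sigma$ in the integral defining $_xI_1^{k-\alpha}u$, which converts it into $_{-1}I_y^{k-\alpha}\tilde u$ evaluated at $y=-x$; the $k$-fold outer $x$-derivative then contributes a factor $(-1)^k$ via $D_x^k[F(-x)]=(-1)^kF^{(k)}(-x)$, which exactly cancels the prefactor $(-1)^k$ appearing in the definition of the right Riemann-Liouville derivative.

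Next I would apply Lemma 2.4 to the product $\tilde f(y)\tilde g(y)=f(-y)g(-y)$, obtaining
$$_{-1}D_y^\alpha[\tilde f(y)\tilde g(y)] \;=\; \sum_{k=0}^\infty \frac{\Gamma(\alpha+1)}{\Gamma(k+1)\Gamma(\alpha-k+1)}\left({_{-1}D_y^{\alpha-k}}\tilde f(y)\right)\tilde g^{(k)}(y).$$
Setting $y=-x$ and invoking the reflection identity, the left-hand side becomes ${_xD_1^\alpha}[f(x)g(x)]$, and each factor ${_{-1}D_y^{\alpha-k}}\tilde f(y)\big|_{y=-x}$ becomes ${_xD_1^{\alpha-k}}f(x)$. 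The only remaining task is to track the sign introduced by differentiating the reflected function: since $\tilde g^{(k)}(y)=(-1)^k g^{(k)}(-y)$, evaluation at $y=-x$ yields $(-1)^k g^{(k)}(x)$. Collecting these factors term by term produces the stated formula.

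The main subtlety I expect is bookkeeping the two competing sources of $(-1)^k$. The first arises when writing $_xD_1^\alpha$ as $(-1)^kD^k$ acting on a reflected integral, and this sign cancels inside the reflection identity because of the chain rule $D_x^k[F(-x)]=(-1)^kF^{(k)}(-x)$. The second arises from the $k$-fold ordinary derivative of $\tilde g$, and this sign survives, producing precisely the $(-1)^k$ in the claimed expansion. Convergence of the series and the termwise validity of the reflection transfer directly from Lemma 2.4, since its continuity hypotheses on $f$ and $g$ and all their derivatives are invariant under $x\mapsto -x$, so no additional regularity assumptions are needed.
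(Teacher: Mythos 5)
Your proposal is correct and coincides with the paper's own argument: the paper proves this lemma precisely by the change of variables $y=-x$ (citing Samko et al., Chap.~15), which is exactly your reflection identity ${_xD_1^\alpha}u(x)={_{-1}D_y^\alpha}\tilde u(y)\big|_{y=-x}$ applied termwise to Lemma~2.4. Your sign bookkeeping --- the $(-1)^k$ from the definition of the right derivative cancelling against the chain rule, while the $(-1)^k$ from $\tilde g^{(k)}(y)=(-1)^kg^{(k)}(-y)$ survives --- is accurate and in fact supplies the details the paper leaves implicit.
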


\subsection{Jacobi Polynomials and Generalized Jacobi Functions}
.We start from the definition of Generalized Jacobi Functions and Gegenbauer polynomials.
\\
%define jacobi, legendre, lobatto polynomials, notations.
\begin{definition}
Let $\alpha >-1$, the Generalized Jacobi Functions (GJF) is defined as follows:
$$\mathcal{J}_n^{-\alpha,-\alpha}(x):=(1-x^2)^{\alpha}P_n^{\alpha,\alpha}(x)$$
where $P_n^{\alpha,\alpha}(x)$ is the $n$-th Jacobi polynomial with respect to the weight function $\omega(x)=(1-x)^{\alpha}(1+x)^{\alpha}$.
\end{definition} \\

\begin{definition}
We define the Gegenbauer polynomials by Jacobi polynomials:
$$C_n^{(\lambda)}(x):=c_{(\lambda,n)}P_n^{(\lambda-\frac{1}{2},\lambda-\frac{1}{2})}(x)=\frac{(2 \lambda)_n}{(\lambda+\frac{1}{2})_n}P_n^{(\lambda-\frac{1}{2},\lambda-\frac{1}{2})}(x).$$
\end{definition} \\
The Gegenbauer polynomials have the following properties:
\begin{eqnarray}
\frac{d}{dx}C_n^{(\lambda)}(x)=2 \lambda C_{n-1}^{(\lambda+1)}(x),
\label{gegenprop1}
\end{eqnarray}
and
\begin{eqnarray}
\max_{-1\leqslant x \leqslant 1} \{ C_n^{(\lambda)}(x) \} \left\{\begin{array}{ll}
=C_n^{(\lambda)}(1)=\frac{\Gamma(n+2\lambda)}{\Gamma(n+1) \Gamma(2 \lambda)} \sim \frac{N^{2\lambda-1}}{\Gamma(2\lambda)}, \ \hbox{when} \ \lambda>0 \\
\leqslant D_{\lambda} N^{\lambda-1}, \ \ \ \ \ \ \ \ \ \hbox{when} \ -\frac{1}{2} <\lambda <0
\end{array}\right.
\label{gegenprop2}
\end{eqnarray}
where $D_{\lambda}$ is a positive constant independent of $n$; and
\begin{eqnarray}
\vert C_{n+1}^{(\lambda)}(z) \vert \geqslant  \frac{n^{\lambda-1} \rho^{n+1}}{2 \Gamma(\lambda)} (1+\rho^{-2})^{-\lambda}, \ \forall z \in \mathcal{E}_{\rho},
\label{gegenprop3}
\end{eqnarray}
where $\mathcal{E}_{\rho}$ is the $Berstein \ ellipse$ defined in (\ref{bersteindef}). When we set $\lambda = \frac{\alpha+1}{2}$, we have:
$$P_{N+1}^{\frac{\alpha}{2},\frac{\alpha}{2}}(1) =\frac{\Gamma(N+2+\frac{\alpha}{2})}{\Gamma(N+2)\Gamma(\frac{\alpha}{2}+1)} \sim N^{\frac{\alpha}{2}},$$
and
$$C_{N+1}^{(\frac{\alpha+1}{2})}(1) = \frac{\Gamma(N+2+\alpha)}{\Gamma(N+2) \Gamma(\alpha+1)}\sim N^{\alpha},$$
consequently
\begin{eqnarray}
c_{(\frac{\alpha+1}{2},N+1)} \sim N^{\frac{\alpha}{2}},
\label{property29}
\end{eqnarray}
where $C_{N+1}^{(\frac{\alpha+1}{2})}(x)=c_{(\frac{\alpha+1}{2},N+1)} P_{N+1}^{\frac{\alpha}{2},\frac{\alpha}{2}}(x)$. \\

Then, the following lemma shows the connection between the GJF and Riesz fraction derivatives.
%we have the following lemma, which shows the connection between the GJF and Riesz fraction derivatives. \\

\begin{lemma}(see \cite{Mao+2015+ANM}, Theorem 2)
Let $\alpha \in (k-1,k)$, $k \in \mathbb{Z}^+$, then we have
\begin{eqnarray}
I_{\nu}^{k-\alpha} \mathcal{J}_n^{-\frac{\alpha}{2},-\frac{\alpha}{2}}(x)=C(k)\frac{\Gamma(n+\alpha+1-k)}{2^{-k}n!}P_{n+k}^{\frac{\alpha}{2}-k,\frac{\alpha}{2}-k}(x);
\end{eqnarray}
moreover, for $m=0,1,\ldots,k-1$,
\begin{eqnarray}
^R D_{\nu}^{\alpha-m} \mathcal{J}_n^{-\frac{\alpha}{2},-\frac{\alpha}{2}}(x)=C(k)\frac{\Gamma(n+\alpha+1-m)}{2^{-m}n!}P_{n+m}^{\frac{\alpha}{2}-m,\frac{\alpha}{2}-m}(x);
\label{lemma211}
\end{eqnarray}
where $\nu=o$, $C(k)=(-1)^{\frac{k-1}{2}}$, if $k$ is odd; $\nu=e$, $C(k)=(-1)^{\frac{k}{2}}$, if $k$ is even. Especially:
\begin{eqnarray}
^R D^{\alpha} \mathcal{J}_n^{-\frac{\alpha}{2},-\frac{\alpha}{2}}(x)=C(k)\frac{\Gamma(n+\alpha+1)}{n!}P_{n}^{\frac{\alpha}{2},\frac{\alpha}{2}}(x).
\label{lemma212}
\end{eqnarray}
\label{lemma28}
\end{lemma}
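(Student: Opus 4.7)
My strategy is to compute the one-sided Riemann--Liouville fractional integrals ${}_{-1}I_x^{k-\alpha}$ and ${}_xI_1^{k-\alpha}$ applied to $\mathcal{J}_n^{-\alpha/2,-\alpha/2}$ in closed form, combine them according to Definition 2.2 to get $I_o^{k-\alpha}$ or $I_e^{k-\alpha}$, and then differentiate $k-m$ times to descend to the Riesz derivative identity. The symmetry of the GJF under $x\mapsto -x$ will let me get the right-sided operator essentially for free once the left-sided one is computed.

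For the first step, I would start from the Rodrigues-type representation
$$\mathcal{J}_n^{-\alpha/2,-\alpha/2}(x)=(1-x^2)^{\alpha/2}P_n^{\alpha/2,\alpha/2}(x)=\frac{(-1)^n}{2^n n!}\frac{d^n}{dx^n}\bigl[(1-x^2)^{n+\alpha/2}\bigr].$$
Since $\alpha>0$, the factor $\phi(x):=(1-x^2)^{n+\alpha/2}$ and its first $n-1$ derivatives vanish at $x=\pm 1$, so the $n$ classical derivatives commute through ${}_{-1}I_x^{k-\alpha}$ with no boundary contribution, reducing the problem to evaluating ${}_{-1}D_x^{n-k+\alpha}\phi(x)$. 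This can be handled in closed form by combining the elementary identity ${}_{-1}D_x^\mu(1+x)^\sigma=\Gamma(\sigma+1)/\Gamma(\sigma-\mu+1)\,(1+x)^{\sigma-\mu}$ with the fractional Leibniz rule (Lemma 2.4) applied to the $(1-x)^{n+\alpha/2}$ factor, and resumming the resulting terminating hypergeometric series via Chu--Vandermonde. The outcome has the form $A_{n,k}P_{n+k}^{\alpha/2-k,\alpha/2-k}(x)$ with $|A_{n,k}|=\Gamma(n+\alpha+1-k)/(2^{-k}n!)$.

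The right-sided integral is then obtained from the parity $\mathcal{J}_n^{-\alpha/2,-\alpha/2}(-x)=(-1)^n\mathcal{J}_n^{-\alpha/2,-\alpha/2}(x)$: substituting $\tau\mapsto-\tau$ in the definition of ${}_xI_1^{k-\alpha}$ together with $P_{n+k}^{a,a}(-x)=(-1)^{n+k}P_{n+k}^{a,a}(x)$ gives ${}_xI_1^{k-\alpha}\mathcal{J}_n^{-\alpha/2,-\alpha/2}=(-1)^k\,{}_{-1}I_x^{k-\alpha}\mathcal{J}_n^{-\alpha/2,-\alpha/2}$. Inserting this into $I_o^{k-\alpha}=c_1({}_{-1}I_x^{k-\alpha}-{}_xI_1^{k-\alpha})$ when $k$ is odd, or $I_e^{k-\alpha}=c_2({}_{-1}I_x^{k-\alpha}+{}_xI_1^{k-\alpha})$ when $k$ is even, the factor $(1\mp(-1)^k)=2$ absorbs into the constants $c_{1,2}$, and trigonometric identities collapse the $\alpha$-dependence to produce the clean sign $C(k)$.

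For the derivative identity with $m=0,\ldots,k-1$, I would differentiate the integral identity $(k-m)$ times and iterate the Jacobi recurrence $\frac{d}{dx}P_j^{a,a}(x)=\tfrac{1}{2}(j+2a+1)P_{j-1}^{a+1,a+1}(x)$, which telescopes to decrease the Jacobi degree from $n+k$ to $n+m$, shift the parameters from $\alpha/2-k$ to $\alpha/2-m$, and convert the product of prefactors into $\Gamma(n+\alpha+1-m)/(2^{-m}n!)$ by standard Gamma identities; the last displayed formula is the special case $m=0$. The main obstacle I anticipate is the bookkeeping of the constants: ensuring that the $\alpha$-dependent quantities $c_1,c_2$ and the trigonometric factors coming from Gamma reflection $\Gamma(z)\Gamma(1-z)=\pi/\sin(\pi z)$ (which intervenes because $\alpha/2-k<0$ produces negative-argument Gammas in intermediate steps) cancel precisely into the $\pm1$ sign $C(k)$, independent of $\alpha$. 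The Chu--Vandermonde resummation in Step~1 is also a delicate piece of the calculation.
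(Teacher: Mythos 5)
There is a genuine gap, and it sits at the load-bearing step of your plan. First, note that the paper does not prove this lemma at all: it is quoted verbatim from Mao, Chen and Shen \cite{Mao+2015+ANM} (Theorem 2), so the only meaningful comparison is with that reference. Your parity reduction is incorrect. Writing $u=\mathcal{J}_n^{-\alpha/2,-\alpha/2}$, so that $u(-x)=(-1)^nu(x)$, the substitution $\tau\mapsto-\tau$ in the right-sided integral gives the \emph{reflection} relation
\begin{equation*}
{}_xI_1^{\gamma}u(x)=(-1)^n\bigl({}_{-1}I_s^{\gamma}u\bigr)\big|_{s=-x},
\end{equation*}
i.e.\ it relates the right integral at $x$ to the left integral at $-x$, with a sign $(-1)^n$ depending on the degree, not on $k$. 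It does \emph{not} yield your claimed pointwise identity ${}_xI_1^{k-\alpha}u(x)=(-1)^k\,{}_{-1}I_x^{k-\alpha}u(x)$, because ${}_{-1}I_x^{k-\alpha}u$ is not itself an even or odd function. What parity legitimately gives is only that $I_e^{k-\alpha}u$ and $I_o^{k-\alpha}u$ have definite parity $(-1)^n$ and $(-1)^{n+1}$ respectively.

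This error is fatal to the rest of the plan, because it reduces $I_o^{k-\alpha}$ (resp.\ $I_e^{k-\alpha}$) to $2c_1$ (resp.\ $2c_2$) times the left-sided integral alone, and the left-sided integral of the GJF is \emph{not} of the asserted closed polynomial form: Bateman's formula ${}_{-1}I_x^{\gamma}[(1+x)^bP_n^{a,b}(x)]\propto(1+x)^{b+\gamma}P_n^{a-\gamma,b+\gamma}(x)$ does not apply because of the extra non-polynomial factor $(1-x)^{\alpha/2}$, and correspondingly the fractional Leibniz series you invoke in Step~1 does not terminate (the derivatives of $(1-x)^{n+\alpha/2}$ never vanish), so there is no Chu--Vandermonde collapse; one obtains a nonterminating ${}_2F_1$ with a branch-type singular part at an endpoint. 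The entire content of the cited theorem is precisely that these singular, non-polynomial parts of the \emph{two} one-sided integrals cancel in the even/odd Riesz combinations, with the trigonometric normalizations $c_1=1/(2\sin(\pi\gamma/2))$, $c_2=1/(2\cos(\pi\gamma/2))$ chosen (via the reflection formula for the Gamma function) exactly to effect this cancellation and leave the pure Jacobi polynomial $P_{n+k}^{\alpha/2-k,\alpha/2-k}$; your argument bypasses this cancellation with the invalid identity above. By contrast, your final step is sound: given the integral identity, applying $D^{k-m}$ and telescoping $\frac{d}{dx}P_j^{a,a}=\frac{j+2a+1}{2}P_{j-1}^{a+1,a+1}$ does correctly produce the coefficient $\Gamma(n+\alpha+1-m)/(2^{-m}n!)$ and, at $m=0$, the last display — but the identity it starts from remains unproved by your Steps 1--2.
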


\section{Legendre-Lobatto Interpolation for $0<\alpha<1$}
According to Definition 2.2, the Riesz fractional derivative of order $\alpha$ is equivalent to the two-sided Riemann-Liouville fractional derivatives of the same order. This leads to singularities at $x=\pm 1$. In order to get rid of the singularities, we consider interpolating $u(x)$ by Lobatto-type polynomials, in particular the Legendre-Lobatto polynomials. By doing so, $x= \pm1$ are zero points of multiplicity 1 of the error $u(x)-u_N(x)$ since $x=\pm 1$ are two of the interpolation points. This guarantees that, after taking derivatives of order $\alpha \in (0,1)$, the global error is finite. In this section, we always assume that $u(x)$ is analytic on $[-1,1]$, and can be analytically extended to a certain $Berstein \ ellipse$.

\subsection{Interpolation of Analytic Functions}
Let $\{ x_i \}_{i=0}^N$ be the $(N+1)$ interpolation points, where $-1 \leqslant x_0<x_1< \cdots <x_N \leqslant 1$. Then we define
\begin{eqnarray}
\omega_{N+1}(x)=\prod_{i=0}^N (x-x_i).
\label{def31}
\end{eqnarray}
If $\{ x_i \}_{i=0}^N$ is the set of zero points of $(N+1)$ degree Legendre-Lobatto polynomial, then
\begin{eqnarray}
\omega_{N+1}(x) \eqsim L_{N-1}(x)-L_{N+1}(x),
\label{def32}
\end{eqnarray}
where $L_{n}(x)$ represents the Legendre polynomial of degree $n$, and the right hand side is exactly the Legendre-Lobatto polynomial of degree $(N+1)$.

Suppose that $u(x)$ is analytic on $[-1,1]$, it is well known that $u(x)$ can be analytically extended to a domain enclosed by the so-callded $Berstein$ $ellipse$, with the foci $\pm 1$:
\begin{eqnarray}
\mathcal{E}_{\rho}:=\{z: z=\frac{1}{2}(\rho e^{i\theta}+\rho e^{-i \theta}), \ 0 \leqslant \theta \leqslant 2\pi \}, \ \rho > 1
\label{bersteindef}
\end{eqnarray}
where $i=\sqrt{-1}$ is the imaginary unit, $\rho$ is the sum of semimajor and semiminor axes. Then we have the following bounds for $\mathcal{L}(\mathcal{E}_{\rho})$, the perimeter of the ellipse, and $\mathcal{D}_{\rho}$, the shortest distance from $\mathcal{E}_{\rho}$ to $[-1,1]$ respectively:
$$\mathcal{L}(\mathcal{E}_{\rho}) \leqslant \pi (\rho+\rho^{-1})^{\frac{1}{2}}, \ \hbox{and} \ \mathcal{D}_{\rho}=\frac{1}{2}(\rho+\rho^{-1})-1.$$
For convenience, we define:
$$M_u=\sup_{z \in \mathcal{E}_{\rho}} |u(z)|.$$
To study the superconvergent property, by introducing the $Hermite's \ contour \ integral$, we have the following point-wise error expression:
\begin{eqnarray}
u(x)-u_N(x)=\frac{1}{2\pi i} \oint_{\mathcal{E}_\rho} \frac{\omega_{N+1}(x)}{z-x} \frac{u(z)}{\omega_{N+1}(z)} dz, \ \forall x \in[-1,1].
\label{def33}
\end{eqnarray}
The following analysis is based on this error expression. \\

\subsection{Theoretical Statements}
Parallel to the conclusion in \cite{Zhao+2016+SISC}, we have the following theorem. \\

\begin{theorem}
Let $0<\alpha<1$. For the interpolation using collocation points as the zeros of Legendre-Lobatto polynomials $\{ x_i \}_{i=0}^N$, the $\alpha$-th Riesz fractional derivative superconverges at $\{ \xi_i^{\alpha} \}$, which satisfies
$$^R D^{\alpha} \omega_{N+1}(\xi_i^{\alpha})=0,\ i=0,1,\ldots,N$$
where $\omega_{N+1}(x)$ is defined by (\ref{def32}).
\end{theorem}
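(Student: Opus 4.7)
The plan is to apply $^R D^\alpha$ to the Hermite contour representation (\ref{def33}) and then decompose the resulting expression into a leading part proportional to $^R D^\alpha \omega_{N+1}(x)$ plus a remainder of strictly higher order in $N$. The zeros $\xi_i^\alpha$ of the leading factor are then exactly the superconvergence points promised by the theorem.

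First I would differentiate (\ref{def33}) by applying $^R D^\alpha$ in $x$. Since $u_N$ is a polynomial, $\omega_{N+1}(z)$ is bounded away from zero on the contour $\mathcal{E}_\rho$, and the integrand depends smoothly on $x$ uniformly in $z$, one may interchange $^R D^\alpha$ with the contour integral to obtain
$$^R D^\alpha (u-u_N)(x) = \frac{1}{2\pi \ii}\oint_{\mathcal{E}_\rho} {^R D^\alpha_x}\!\left(\frac{\omega_{N+1}(x)}{z-x}\right)\frac{u(z)}{\omega_{N+1}(z)}\,dz.$$
The next step is to evaluate the integrand factor. Treating $\omega_{N+1}(x)\cdot 1/(z-x)$ as a product and applying the left and right Leibniz rules (Lemma 2.4 and Lemma 2.5) separately, with $(d/dx)^k[1/(z-x)]=k!/(z-x)^{k+1}$, then combining through $^R D^\alpha = c_1({_{-1}D^\alpha_x} + {_xD^\alpha_1})$ (which holds since $k=1$ is odd for $\alpha\in(0,1)$), yields
$$^R D^\alpha_x\!\left(\frac{\omega_{N+1}(x)}{z-x}\right) = \frac{^R D^\alpha \omega_{N+1}(x)}{z-x} + c_1\sum_{k=1}^{\infty}\frac{\Gamma(\alpha+1)\,k!}{\Gamma(k+1)\Gamma(\alpha-k+1)(z-x)^{k+1}}\bigl[{_{-1}I^{k-\alpha}_x}+(-1)^k{_xI^{k-\alpha}_1}\bigr]\omega_{N+1}(x),$$
where I have used that $\alpha-k<0$ for $k\ge 1$ converts the Riemann-Liouville derivatives into the corresponding fractional integrals. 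Substituting back and isolating the $k=0$ contribution gives the decomposition
$$^R D^\alpha(u-u_N)(x) = {^R D^\alpha \omega_{N+1}(x)}\cdot A_N(x) + R_N(x),$$
where $A_N(x)=\frac{1}{2\pi \ii}\oint_{\mathcal{E}_\rho} u(z)/[(z-x)\omega_{N+1}(z)]\,dz$ and $R_N(x)$ collects the tail $k\ge 1$.

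The theorem then follows: at any $\xi_i^\alpha$ satisfying $^R D^\alpha \omega_{N+1}(\xi_i^\alpha)=0$, the first summand vanishes and the pointwise error reduces to $R_N(\xi_i^\alpha)$ alone. The main obstacle, and the step requiring the most care, will be showing that $R_N$ is strictly of higher order in $N$ than the leading contribution $^R D^\alpha\omega_{N+1}(x)\cdot A_N(x)$. Each tail term carries an extra factor $(z-x)^{-(k+1)}$ in the contour integrand, which only enlarges the integral by a constant depending on the distance $\mathcal{D}_\rho$, but is multiplied by the fractional integral $[_{-1}I^{k-\alpha}_x+(-1)^k{_xI^{k-\alpha}_1}]\omega_{N+1}(x)$, which inherits decay in $N$ from the Lobatto identity $\omega_{N+1}(x)\eqsim L_{N-1}(x)-L_{N+1}(x)$. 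Combined with the lower bound on $|\omega_{N+1}(z)|$ on $\mathcal{E}_\rho$ of order $\rho^N$ and a justification of exchanging the infinite Leibniz sum with the contour integral, this yields the strict dominance of the leading term and hence superconvergence at the prescribed zeros.
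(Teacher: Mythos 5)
Your proposal takes essentially the same route as the paper's own proof: apply ${}^R D^{\alpha}$ under the Hermite contour integral (\ref{def33}), expand $\omega_{N+1}(x)/(z-x)$ via the left and right fractional Leibniz rules (Lemmas 2.4--2.5) with $g(x)=1/(z-x)$, $g^{(k)}(x)=k!/(z-x)^{k+1}$, and single out the $m=0$ term proportional to ${}^R D^{\alpha}\omega_{N+1}(x)$, whose zeros $\xi_i^{\alpha}$ are the superconvergence points once the tail is shown to be of higher order. The ``main obstacle'' you flag --- proving strict dominance of the leading term --- is exactly the point the paper itself does not re-derive but delegates to the analysis in \cite{Zhao+2016+SISC}, so your sketch matches the paper's level of rigor as well as its structure.
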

\begin{proof} The proof starts with (\ref{def33}), according to (\ref{def22}), (\ref{lemma24}), (\ref{lemma25}), $\forall x \in[-1,1]$, we have:
\begin{eqnarray}
&&D^{\alpha} (u(x)-u_N(x)) \nonumber \\
&=&\frac{1}{2\pi i} \oint_{\mathcal{E}_\rho} D^{\alpha}( \frac{\omega_{N+1}(x)}{z-x} )\frac{u(z)}{\omega_{N+1}(z)} dz \nonumber \\
&=&\frac{c_1}{2 \pi i} \oint_{\mathcal{E}_\rho} (_{-1}D_x^{\alpha}+{_x D_1^{\alpha}})( \frac{\omega_{N+1}(x)}{z-x} )\frac{u(z)}{\omega_{N+1}(z)} dz \nonumber \\
&=&\frac{c_1}{2 \pi i} \oint_{\mathcal{E}_\rho} \sum_{m=0}^{\infty} \frac{\Gamma(\alpha+1)}{\Gamma(\alpha-m+1)} \frac{(_{-1}D_x^{\alpha-m}+(-1)^k{_x D_1^{\alpha-m}})\omega_{N+1}(x)}{(z-x) ^{m+1}} \frac{u(z)}{\omega_{N+1}(z)} dz \nonumber \\
&=&\frac{1}{2 \pi i} \oint_{\mathcal{E}_\rho}  \sum_{m=0}^{\infty} \frac{\Gamma(\alpha+1)}{\Gamma(\alpha-m+1)} \frac{^R D_o^{\alpha-m} \omega_{N+1}(x)} {(z-x) ^{m+1}}\frac{u(z)}{\omega_{N+1}(z)} dz
\end{eqnarray}
According to the analysis in \cite{Zhao+2016+SISC}, the decay of the error is dominated by the leading term:
$$ \oint_{\mathcal{E}_\rho} \frac{^R D^{\alpha} \omega_{N+1}(x)} {(z-x) }\frac{u(z)}{\omega_{N+1}(z)} dz.$$
When $x=\xi_i^{\alpha}, i=0,1,\ldots,N$, the leading term vanishes, and the remaining terms have higher convergent rates.
\end{proof} \\ \\
Next, we describe a method to compute $\{ \xi_i^{\alpha} \}_{i=0}^N$. For $0<\mu<1$, we start from
$$_{-1}D_x^\mu L_n(x)=\frac{\Gamma(n+1)}{\Gamma(n-\mu+1)}(1+x)^{-\mu} P_n^{\mu,-\mu}(x), \ x \in[-1,1], $$
$$_{x}D_1^\mu L_n(x)=\frac{\Gamma(n+1)}{\Gamma(n-\mu+1)}(1-x)^{-\mu} P_n^{-\mu,\mu}(x), \ x \in[-1,1], $$
to have:
\begin{eqnarray*}
&&^R D^{\alpha} (L_{N-1}-L_{N+1})(x)  \\
&=& c_1(_{-1}D_x^\alpha+_{x}D_1^\alpha)(L_{N-1}(x)-L_{N+1}(x)) \\
&=& c_1(_{-1}D_x^\alpha L_{N-1}(x)+{_{x}D_1^\alpha} L_{N-1}(x) -{_{-1}D_x^\alpha} L_{N+1}(x)-{_{x}D_1^\alpha}L_{N+1}(x)) \\
&=& \frac{\Gamma(N)}{2 \sin(\pi\gamma/2) \Gamma(N-\mu)}[(1+x)^{-\mu} P_{N-1}^{\mu,-\mu}(x)+(1-x)^{-\mu} P_{N-1}^{-\mu,\mu}(x)] \\
&&- \frac{\Gamma(N+2)}{2 \sin(\pi\gamma/2) \Gamma(N-\mu+2)}[(1+x)^{-\mu} P_{N+1}^{\mu,-\mu}(x)+(1-x)^{-\mu} P_{N+1}^{-\mu,\mu}(x)]
\end{eqnarray*}
Then the roots of $^R D^{\alpha} (L_{N-1}-L_{N+1})(x)$ can be computed numerically. \\

\subsection{Numerical Statements and Validations}
\subsubsection{Numerical Validations for Superconvergent Points}
In this subsection, some numerical examples are presented to show the superconvergence. We consider the function $$u(x)=(1+x)^9(1-x)^9.$$
\begin{figure}[htbp] %  figure placement: here, top, bottom, or page
   \centering
   \includegraphics[width=4.5in]{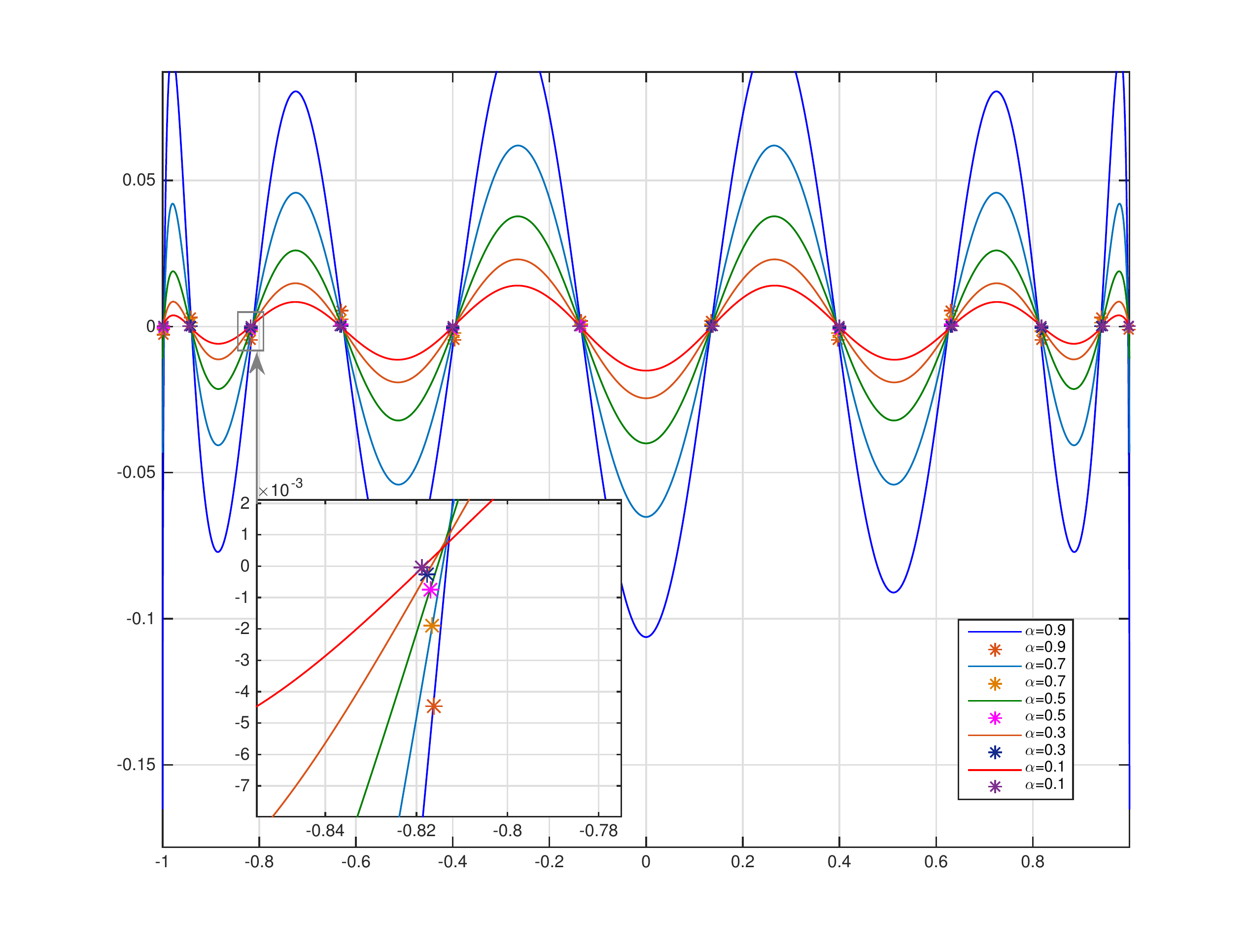}
   \caption{Curves $^RD^\alpha (u-u_{11})(x)$ for different $\alpha$,
   zeros of $^RD^\alpha \omega_{12}(x)$ are highlighted by $*$.}
\end{figure}
Here $u(x)$ is interpolated at $N+1=12$ zero points of $\omega_{12}(x)$, where $\omega_{12}(x)$ is the Legendre-Lobatto polynomial of degree 12. We set $\alpha=0.1$, $0.3$, $0.5$, $0.7$, and $0.9$, respectively. Fig 3.1 plots $^RD^{\alpha}{(u-u_{11})(x)}$ on $[-1,1]$  with different $\alpha$, and the asterisks indicate the superconvergent points, i.e. the zeros of $^RD^{\alpha} \omega_{12}(x)$, predicted by Theorem 3.1. We see the errors at those points are much smaller than the global maximal error. In addition, we observe that both the global maximal error and the errors at the superconvergent points increase, when $\alpha$ increases. \\

\subsubsection{Numerical Observation of Superconvergent Rates}

In order to quantify the superconvergence rate, we define the following ratio:
\begin{eqnarray}
\hbox{ratio}=\frac{\max_{-1\leqslant x \leqslant 1}{\vert ^R D^{\alpha}(u-u_N)(x) \vert}}{\max_{0\leqslant i \leqslant N}{\vert ^R D^{\alpha}(u-u_N)(\xi_i^{\alpha}) \vert}}
\label{ratio1}
\end{eqnarray}
and use $u(x)=(1+x)^9(1-x)^9$ as an example to plot the ratio in the log-log chart with $N = 8, 10, 12, 14, 16$
in Fig. 3.2 (for $\alpha = 0.01, 0.1, 0.2, 0.3, 0.4, 0.5$) and Fig. 3.3 (for $\alpha = 0.5, 0.6, 0.7, 0.8, 0.9, 0.99$).
Two lines $N^2$ and $N^3$ are also plotted as reference slopes.
\begin{figure}[htbp] %  figure placement: here, top, bottom, or page
   \centering
    \includegraphics[width=3.5in]{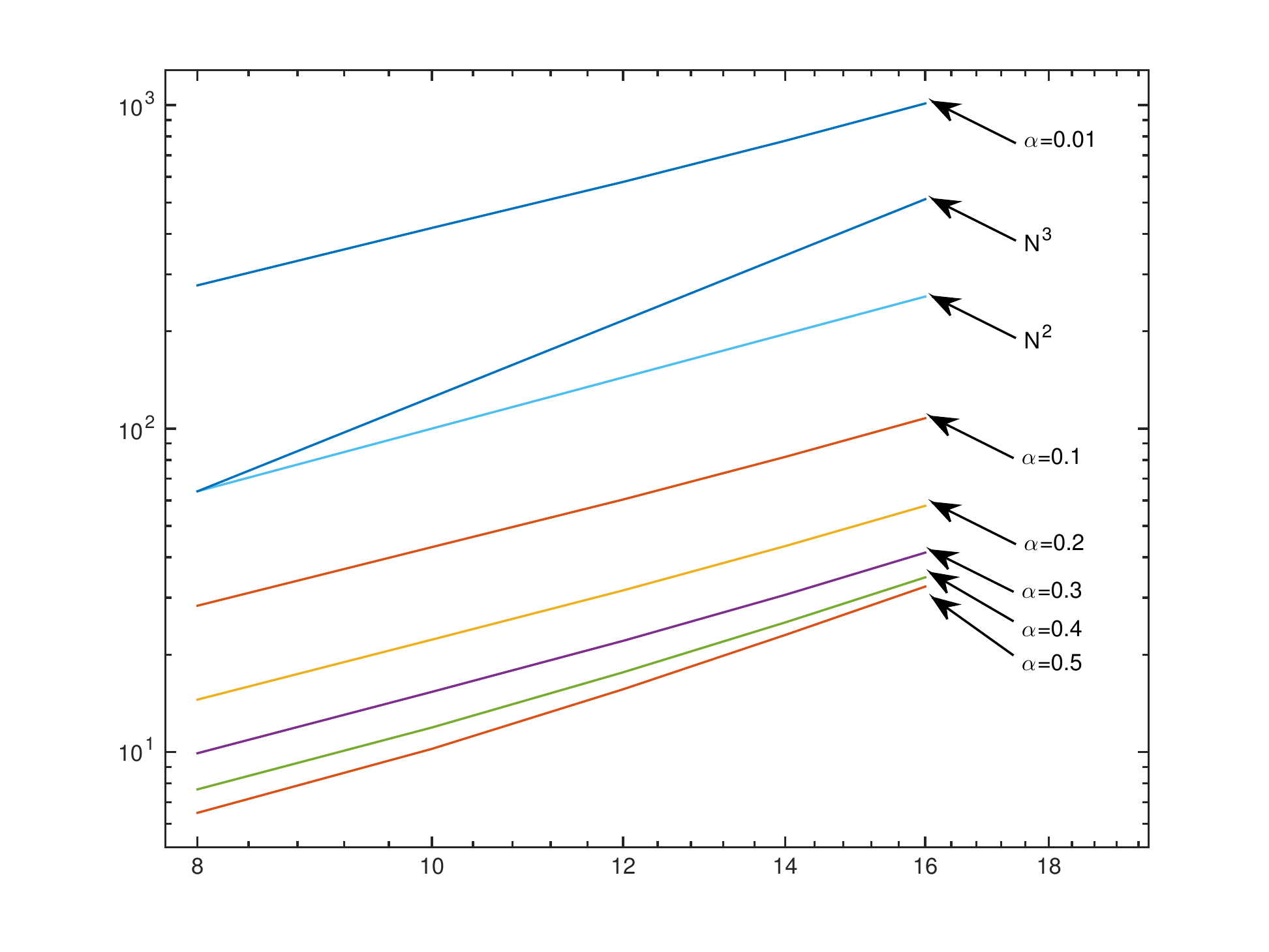}
   \caption{Superconvergent ratios of (3.6) for different $\alpha$-derivatives.}
\end{figure} \\
\begin{figure}[htbp] %  figure placement: here, top, bottom, or page
   \centering
    \includegraphics[width=3.5in]{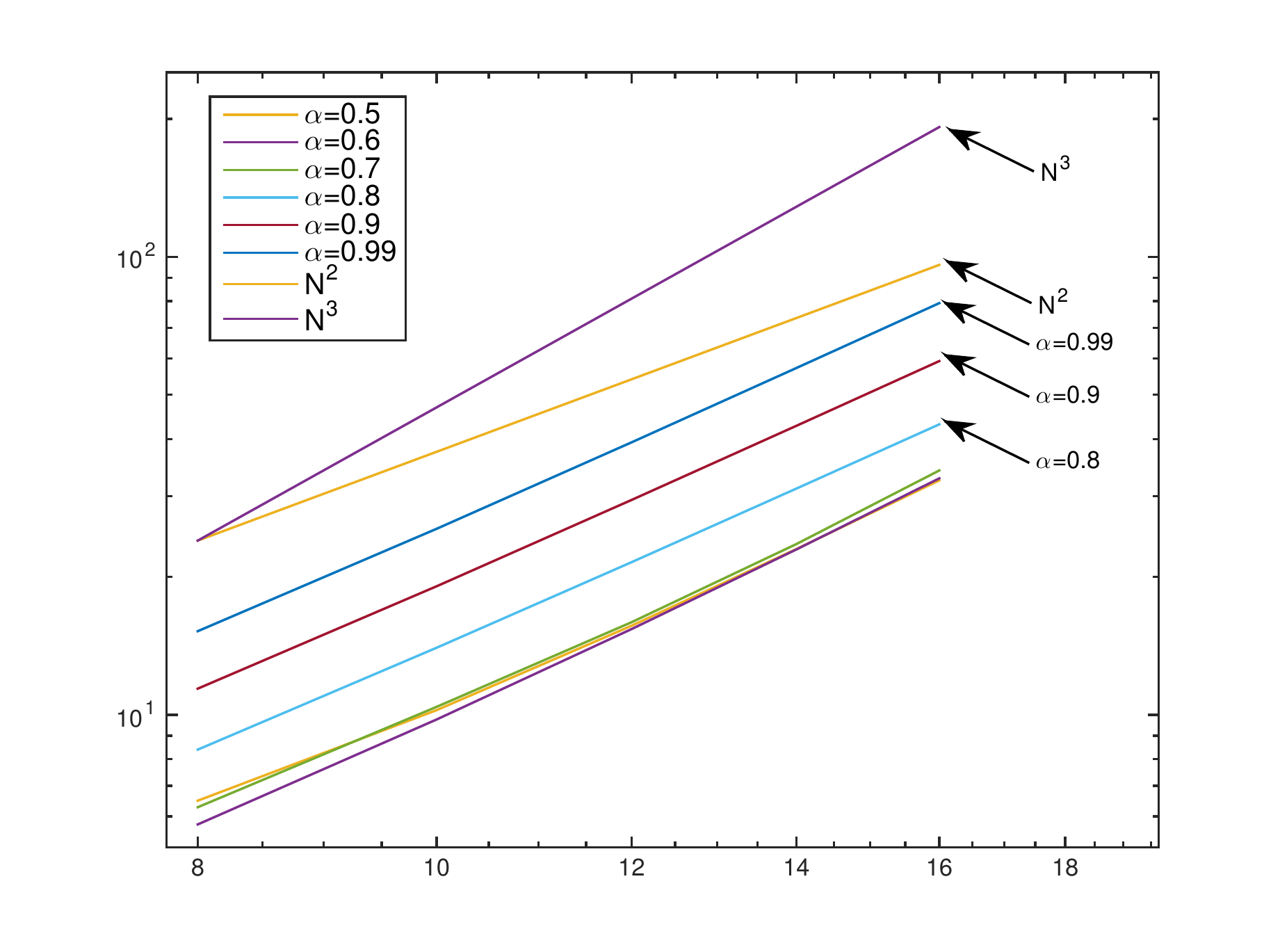}
   \caption{Superconvergent ratios of (3.6) for different $\alpha$-derivatives.}
\end{figure}
We see that at the superconvergent points, the convergent rate is at least $O(N^{-2})$ faster than the global rate. \\

\section{GJF Fractional Interpolation For Arbitrary Positive $\alpha$}
\subsection{Theoretical Statements}
When $\alpha>1$, interpolation by the Lobatto-type polynomials, which provides zeros of multiplicity 1 at $x=\pm 1$ does not work anymore, since it is not able to control the two-sided Riemann-Liouville fractional derivatives of order $\alpha > 1$. Inspired by Lemma 2.8, the GJF fractional interpolation is introduced here. On the other hand, due to singularities at $x=\pm 1$, some more strict conditions are required for $u(x)$.  Let $\alpha>1$ be the order of Riesz fractional derivatives, we always assume that $(1-x^2)^{-\frac{\alpha}{2}}u(x)$ is analytic on $[-1,1]$, and can be analytically extended to a $Berstein \ ellipse$ with an appropriate $\rho$. In this section, we mainly concentrate on the analysis of the situation $0<\alpha<2$. The conclusion can be generalized to the cases $\alpha>2$. Let's start with the definition of GJF fractional interpolation: \\
\begin{definition}
Let $\alpha \in (k-1,k)$ be a given positive real number, where $k \in \mathbb{Z}^+$. Define
\begin{eqnarray}
\alpha^*=\left\{\begin{array}{ll}
\alpha, \ \ 0<\alpha<2, \\
\alpha-k+1, \ \ 2<\alpha \ \hbox{and} \ k \ \hbox{is odd} \\
\alpha-k+2, \ \ 2<\alpha \ \hbox{and} \ k \ \hbox{is even}
\end{array}\right.
\label{def41}
\end{eqnarray}
Suppose $v(x):=(1-x^2)^{-\frac{\alpha^*}{2}}u(x)  \in C[-1,1]$, the goal is to find
\begin{eqnarray}
u_N(x)=(1-x^2)^{\frac{\alpha^*}{2}}v_N(x),
\label{def42}
\end{eqnarray}
where $v_N(x) \in \mathbb{P}_N[-1,1]$, such that
\begin{eqnarray}
u_N(x_k)=u(x_k), \ \  -1\leqslant x_0 <x_1 <\cdots <x_N \leqslant 1,
\label{def43}
\end{eqnarray}
then $u_N(x)$ is called the GJF fractional interpolant with order $\frac{\alpha^*}{2}$ of $u(x)$.
\end{definition} \\
In fact, (\ref{def43}) is equivalent to find $v_N(x) \in \mathbb{P}_N[-1,1]$, such that
\begin{eqnarray}
v_N(x_k)=v(x_k)=(1-x_k^2)^{-\frac{\alpha^*}{2}}u(x_k), \ \  -1\leqslant x_0 <x_1 <\cdots <x_N \leqslant 1.
\end{eqnarray}
Since $v_N(x)$ is a polynomial approximation, by (\ref{def33}), (\ref{def42}), (\ref{def43}), $\forall x \in[-1,1]$, the error can be expressed by:
\begin{eqnarray*}
u(x)-u_N(x)&=&(1-x^2)^{\frac{\alpha^*}{2}}(v(x)-v_N(x)) \\
&=&\frac{1}{2\pi i} \oint_{\mathcal{E}_\rho} \frac{(1-x^2)^{\frac{\alpha^*}{2}}\omega_{N+1}(x)}{z-x} \frac{v(z)}{\omega_{N+1}(z)} dz,
\end{eqnarray*}
then we have:
\begin{eqnarray}
&& ^R D^{\alpha}(u(x)-u_N(x)) \nonumber \\
&=&\frac{1}{2\pi i} \oint_{\mathcal{E}_\rho} {^R D^{\alpha}}[\frac{(1-x^2)^{\frac{\alpha^*}{2}}\omega_{N+1}(x)}{z-x}] \frac{v(z)}{\omega_{N+1}(z)} dz \nonumber \\
&=& \frac{1}{2 \pi i}  \oint_{\mathcal{E}_\rho}   \sum_{m=0}^{\infty} \frac{\Gamma(\alpha+1)}{\Gamma(\alpha-m+1)} \frac{{^R D_{\nu}^{\alpha-m}} [c_{(\frac{\alpha^*+1}{2},N+1)}\mathcal{J}_{N+1}^{-\frac{\alpha^*}{2},-\frac{\alpha^*}{2}}(x)]}{ (z-x) ^{m+1}} \frac{v(z)}{\omega_{N+1}(z)} dz \nonumber \\
\label{proof45}
\end{eqnarray}
where $\nu=o$ when $k$ is odd and $\nu=e$ when $k$ is even. If we set $\{x_i \}_{i=0}^N$ to be zero points of $P_{N+1}^{\frac{\alpha^*}{2},\frac{\alpha^*}{2}}(x)$, then we can consider
$$\omega_{N+1}(x)=c_{(\frac{\alpha^*+1}{2},N+1)}P_{N+1}^{\frac{\alpha^*}{2},\frac{\alpha^*}{2}}(x)=C_{N+1}^{\frac{\alpha^*+1}{2}}(x).$$
Due to the form in (\ref{lemma211}), for $m \in \mathbb{N}$, define:
\begin{eqnarray}
\phi_m(x)= \frac{\Gamma(N+2)}{\Gamma(N+2+\alpha^*)} {^R D_{\nu}^{\alpha-m}} [c_{(\frac{\alpha^*+1}{2},N+1)}\mathcal{J}_{N+1}^{-\frac{\alpha^*}{2},-\frac{\alpha^*}{2}}(x)],
\label{def46}
\end{eqnarray}
$$\phi_m=\Vert \phi_m \Vert_{L^{\infty}[-1,1]}.$$
In the following analysis, we only consider $0<\alpha<2$, so that $\alpha=\alpha^*$.
Then, according to (\ref{gegenprop2}) and Lemma \ref{lemma28}, when $0<\alpha<1$, we have:
\begin{eqnarray}
\phi_0(x) = C_{N+1}^{\frac{\alpha+1}{2}}(x), \ \phi_0=\frac{\Gamma(N+\alpha+2)}{\Gamma(N+2)\Gamma(\alpha+1)} \sim \frac{(N+1)^{\alpha}}{\Gamma(\alpha+1)},
\label{proof47}
\end{eqnarray}
\begin{eqnarray}
\phi_1(x) = \frac{1}{\alpha-1} C_{N+2}^{\frac{\alpha-1}{2}}(x), \ \phi_1 \leqslant \frac{D_{(\alpha-1)/2}}{\alpha-1} (N+1)^{\frac{\alpha-3}{2}},
\label{proof48}
\end{eqnarray}
%\begin{eqnarray*}
%\phi_0(x)&=&  C_{N+1}^{\frac{\alpha+1}{2}}(x), \ \phi_0=\frac{\Gamma(N+\alpha+2)}{\Gamma(N+2)\Gamma(\alpha+1)} \sim \frac{(N+1)^{\alpha}}{\Gamma(\alpha+1)}, \\
%\phi_1(x) &=& \frac{1}{\alpha-1} C_{N+2}^{\frac{\alpha-1}{2}}(x), \ \phi_1 \leqslant \frac{D_{(\alpha-1)/2}}{\alpha-1} (N+1)^{\frac{\alpha-3}{2}},
%\end{eqnarray*}
when $m \geqslant 2$, $\alpha-m <-1$, we have:
\begin{eqnarray}
\phi_m(x) = {_{-1}I_x^{m-1}} [I^{1-\alpha}_o {c \mathcal{J}_{N+1}^{-\frac{\alpha}{2},-\frac{\alpha}{2}}(x)}]= {_{-1}I_x^{m-1}} \phi_1(x),
\label{proof49}
\end{eqnarray}
where $c=\frac{\Gamma(N+2)}{\Gamma(N+2+\alpha)}c_{(\frac{\alpha+1}{2},N+1)}$ , and therefore
\begin{eqnarray}
\phi_m =\max_{-1 \leqslant x \leqslant 1} \{ \frac{1}{\Gamma(m-1)} \int_{-1}^x (x-t)^{m-2}\phi_1(t) dt \}\leqslant \frac{2^{m-1}}{\Gamma(m)} \Vert {\phi_1(x)} \Vert_{L^{\infty}[-1,1]}.
\label{proof410}
\end{eqnarray}
On the other hand, when $1<\alpha<2$, we have:
\begin{eqnarray}
\phi_0(x) =  C_{N+1}^{\frac{\alpha+1}{2}}(x), \ \phi_0=\frac{\Gamma(N+\alpha+2)}{\Gamma(N+2)\Gamma(\alpha+1)} \sim \frac{(N+1)^{\alpha}}{\Gamma(\alpha+1)},
\label{proof411}
\end{eqnarray}
\begin{eqnarray}
\phi_1(x) = \frac{1}{\alpha-1} C_{N+2}^{\frac{\alpha-1}{2}}(x), \ \phi_1=\frac{\Gamma(N+\alpha+1)}{\Gamma(N+3)\Gamma(\alpha+1)} \sim \frac{(N+1)^{\alpha-2}}{\Gamma(\alpha+1)},
\label{proof412}
\end{eqnarray}
and
$$\phi_2(x) = \int_{-1}^x \phi_1(x) dx +C_0 = {_{-1}I_x^{1}} \phi_1(x)+C_0,$$
%\begin{eqnarray*}
%\phi_0(x)&=&  C_{N+1}^{\frac{\alpha+1}{2}}(x), \ \phi_0=\frac{\Gamma(N+\alpha+2)}{\Gamma(N+2)\Gamma(\alpha+1)} \sim \frac{(N+1)^{\alpha}}{\Gamma(\alpha+1)}, \\
%\phi_1(x) &=& \frac{1}{\alpha-1} C_{N+2}^{\frac{\alpha-1}{2}}(x), \ \phi_1=\frac{\Gamma(N+\alpha+1)}{\Gamma(N+3)\Gamma(\alpha+1)} \sim \frac{(N+1)^{\alpha-2}}{\Gamma(\alpha+1)}, \\
%\phi_2(x) &=& {_{-1} I_{x}^1} \phi_1(x)+C_0,
%\end{eqnarray*}
where
\begin{eqnarray*}
C_0 &=& I^{2-\alpha}_e  [{\frac{\Gamma(N+2)}{\Gamma(N+2+\alpha)}} c_{(\frac{\alpha+1}{2},N+1)} \mathcal{J}_{N+1}^{-\frac{\alpha}{2},-\frac{\alpha}{2}}(-1)] \\
&=& c_2\frac{\Gamma(N+2)}{\Gamma(N+2+\alpha)} \int_{-1}^1 \frac{(1+t)^{\frac{\alpha}{2}}(1-t)^{\frac{\alpha}{2}}C_{N+1}^{\frac{\alpha+1}{2}}(t)}{(1+t)^{\alpha-1}}dt \\
&\leqslant& { \frac{c_2 c_{(\frac{\alpha+1}{2},N+1)}}{N^{\alpha}}} \int_{-1}^1 (1+t)^{1-\frac{\alpha}{2}}(1-t)^{\frac{\alpha}{2}}P_{N+1}^{\frac{\alpha}{2},\frac{\alpha}{2}}(t)dt
= {\frac{c_2 c_{(\frac{\alpha+1}{2},N+1)}}{N^{\alpha}}} \cdot {_{N+1}^{(\frac{\alpha}{2},\frac{\alpha}{2})}c_0^{(\frac{\alpha}{2},1-\frac{\alpha}{2})}}
%& \sim & N^{\frac{3}{2}\alpha-3} =o(\phi_1)
\end{eqnarray*}
with $c_2=\frac{1}{2 \cos(\pi\gamma/2)\Gamma(\gamma)}$}, and ${_{N+1}^{(\frac{\alpha}{2},\frac{\alpha}{2})}c_0^{(\frac{\alpha}{2},1-\frac{\alpha}{2})}}$ represents the weighted inner product (see \cite{shen+wang+tang+spectral}, pp 76):
\begin{eqnarray*}
{_{N+1}^{(\frac{\alpha}{2},\frac{\alpha}{2})}c_0^{(\frac{\alpha}{2},1-\frac{\alpha}{2})}}&:=&(1,P_{N+1}^{\frac{\alpha}{2},\frac{\alpha}{2}}(x))_{\omega^{\frac{\alpha}{2},1-\frac{\alpha}{2}}} \\
&=& \frac{2\Gamma(N+\frac{\alpha}{2}+2)}{\Gamma(N+\alpha+2)\Gamma(\frac{\alpha}{2}+1)} \sum_{m=0}^{N+1} \frac{(-1)^m \Gamma(N+\alpha+m+2)}{m!(N-m+1)!\Gamma(m+3)},
\end{eqnarray*}
%$$P_{N+1}^{\frac{\alpha}{2},\frac{\alpha}{2}}(x) = \sum_{k=0}^{N+1} {{\color{blue}c}_{N+1}^{(\frac{\alpha}{2},\frac{\alpha}{2})}c_k^{(\frac{\alpha}{2},1-\frac{\alpha}{2})}} \cdot P_{k}^{\frac{\alpha}{2},1-\frac{\alpha}{2}}(x),$$
\begin{figure}[htbp] %  figure placement: here, top, bottom, or page
   \centering
   \includegraphics[width=3.5in]{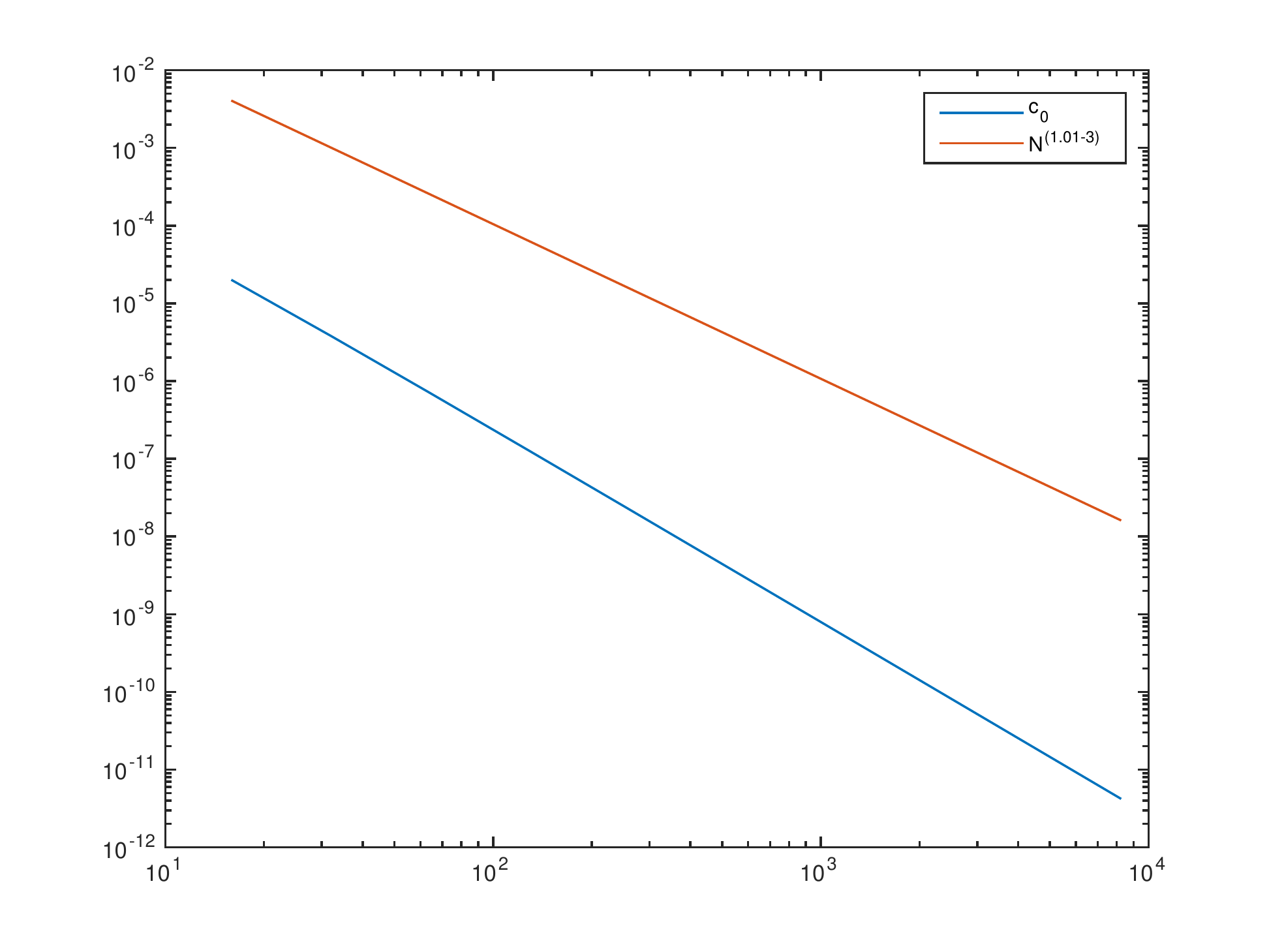}
   \caption{ $\alpha=1.01$, ${_{N+1}^{(\frac{\alpha}{2},\frac{\alpha}{2})}c_0^{(\frac{\alpha}{2},1-\frac{\alpha}{2})}}$ (blue) decays faster than $N^{-1.99}$ (red).}
      \includegraphics[width=3.5in]{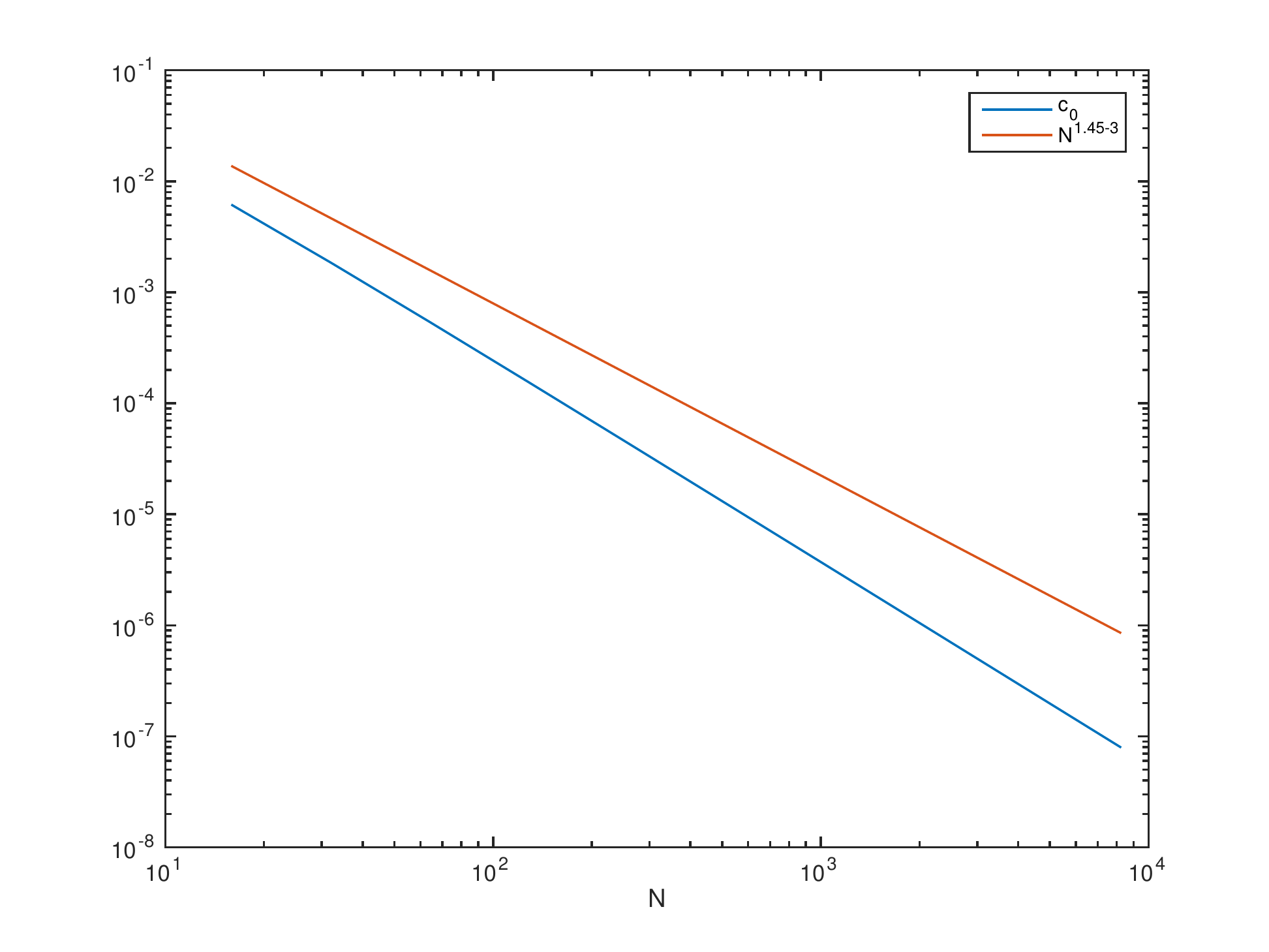}
   \caption{$\alpha=1.45$, ${_{N+1}^{(\frac{\alpha}{2},\frac{\alpha}{2})}c_0^{(\frac{\alpha}{2},1-\frac{\alpha}{2})}}$ (blue) decays faster than $N^{-1.55}$ (red).}
      \includegraphics[width=3.5in]{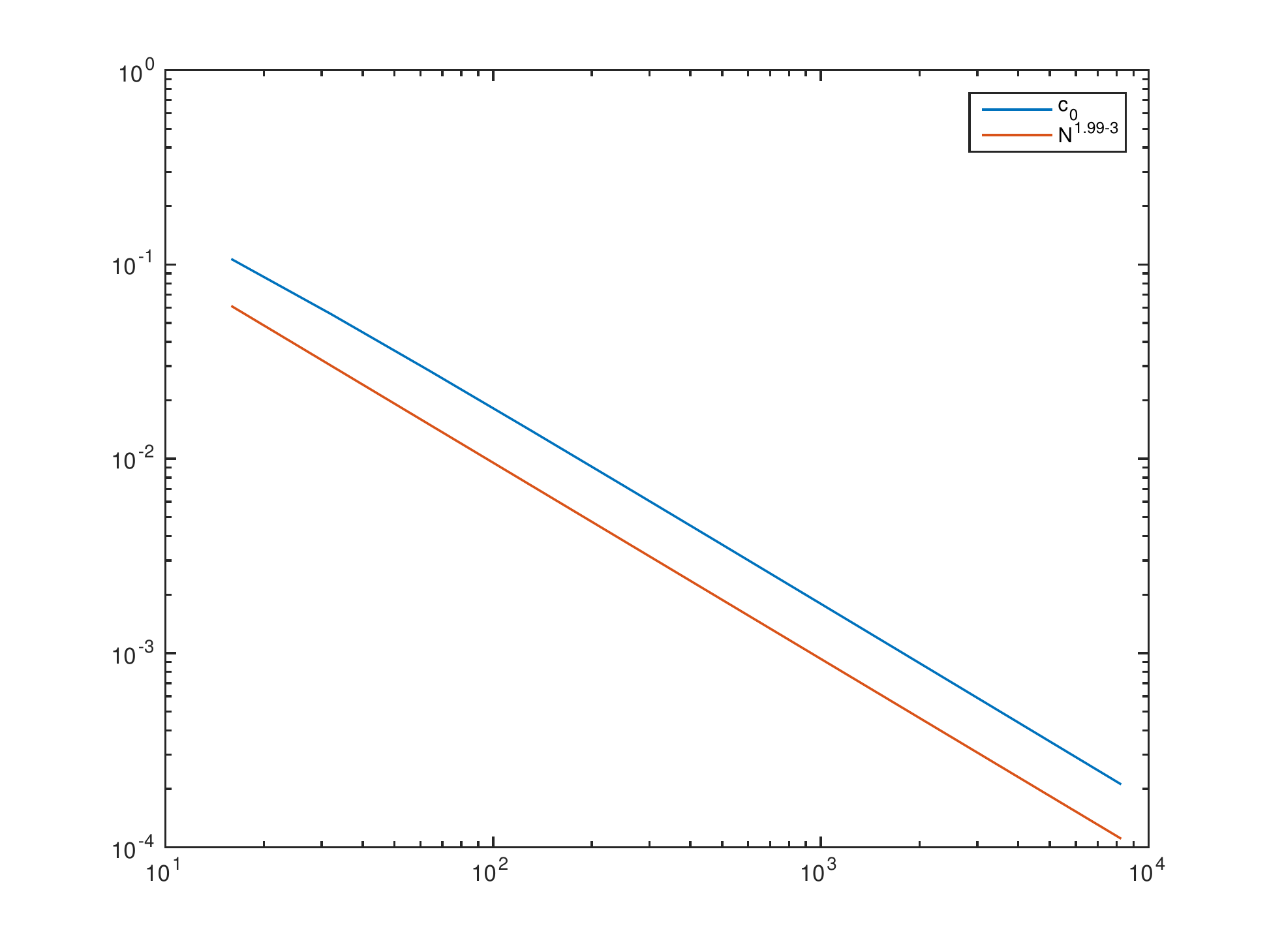}
   \caption{ $\alpha=1.99$, ${_{N+1}^{(\frac{\alpha}{2},\frac{\alpha}{2})}c_0^{(\frac{\alpha}{2},1-\frac{\alpha}{2})}}$ (blue) decays as fast as $N^{-1.01}$ (red).}
\end{figure}
so it is a constant term that can be numerically estimated, seeing Fig. 4.1-4.3:
$${_{N+1}^{(\frac{\alpha}{2},\frac{\alpha}{2})}c_0^{(\frac{\alpha}{2},1-\frac{\alpha}{2})}} \lesssim N^{\alpha-3},$$
by (\ref{property29}), we have:
$$C_0 \lesssim N^{{\frac{1}{2}}\alpha-3}=o(\phi_1);$$
on the other hand, we have:
$$ \vert \int_{-1}^x \phi_1(x)dx \vert \leqslant \frac{1}{\Gamma(1)} \int_{-1}^x \vert \phi_1(t) \vert dt \leqslant (1+x) \phi_1 \leqslant 2 \phi_1$$
therefore,
\begin{eqnarray}
\phi_2 \leqslant 2\phi_1+o(\phi_1),
\label{proof413}
\end{eqnarray}
when $m \geqslant 3$, $\alpha-m <-1$, then we have:
%\begin{eqnarray*}
$$ %{^RD_e^{\alpha-m}} {\color{green}[c\mathcal{J}_{N+1}^{-\frac{\alpha}{2},-\frac{\alpha}{2}}(x)]}
{\phi_m(x)}= {_{-1}I_x^{m-2}} [I^{2-\alpha}_e  c\mathcal{J}_{N+1}^{-\frac{\alpha}{2},-\frac{\alpha}{2}}(x)]= {_{-1}I_x^{m-2}} \phi_2(x)$$
%=&& c_2 \vert {_{-1} I_{x}^{m-\alpha}} f(x)+(-1)^{k-2}{_{x} I_{1}^{m-\alpha}} f(x) \vert \\
%=&& c_2\vert {_{-1} I_{x}^{m-2}} [{_{-1} I_{x}^{2-\alpha}} f(x)]+(-1)^k {_{x} I_{1}^{m-2}} [{_{x} I_{1}^{2-\alpha}} f(x)] \vert \\
%\leqslant&& c_2({_{-1} I_{x}^{m-2}} \vert {_{-1} I_{x}^{2-\alpha}} f(x) \vert + {_{x} I_{1}^{m-2}} \vert{_{x} I_{1}^{2-\alpha}} f(x) \vert) \\
%\leqslant&& c_2({_{-1} I_{x}^{m-2}} \vert {_{-1} I_{x}^{2-\alpha}} f(x) \vert + {_{x} I_{1}^{m-2}} \vert {_{-1} I_{x}^{2-\alpha}} f(x) \vert+ {_{-1} I_{x}^{m-2}} \vert {_{x} I_{1}^{2-\alpha}} f(x) \vert + {_{x} I_{1}^{m-2}} \vert{_{x} I_{1}^{2-\alpha}} f(x) \vert) \\
%\end{eqnarray*}
so we have
\begin{eqnarray}
\phi_m =\max_{-1 \leqslant x \leqslant 1} \{ \frac{1}{\Gamma(m-2)} \int_{-1}^x (x-t)^{m-3}\phi_2(t) dt \}\leqslant \frac{2^{m-2}}{\Gamma(m-1)} \phi_2.
\label{proof414}
\end{eqnarray}
Then, when $1<\alpha<2$, by (\ref{gegenprop3}), we can continue the estimate of (\ref{proof45}):
\begin{eqnarray}
&& \vert ^R D^{\alpha}(u(x)-u_N(x)) \vert \nonumber\\
&\leqslant& \frac{1}{2 \pi } \frac{\Gamma(N+2+\alpha)}{N+2} \oint_{\mathcal{E}_\rho}   \sum_{m=0}^{\infty}  {\alpha \choose m} \frac{\Gamma(m+1) \vert \phi_m(x) \vert}{\vert z-x \vert ^{m+1}} \frac{\vert v(z) \vert }{\vert C_{N+1}^{\frac{\alpha+1}{2}}(z) \vert} d \vert z \vert \nonumber\\
&\leqslant & \frac{1}{2 \pi } \frac{\Gamma(N+2+\alpha)}{N+2}  \sum_{m=0}^{\infty}  {\alpha \choose m} \frac{\Gamma(m+1)  \phi_m }{(\mathcal{D}_{\rho}) ^{m+1}} \cdot \frac{M_v \cdot \mathcal{L}(\mathcal{E}_{\rho})}{\frac{(N+1)^{\frac{\alpha-1}{2}} \rho^{N+1}}{2 \Gamma(\frac{\alpha+1}{2}) } (1+\rho^{-2})^{-\frac{\alpha+1}{2}}} \nonumber\\
&\leqslant& M_v \frac{\Gamma(N+2+\alpha)\Gamma(\frac{\alpha+1}{2}) (1+\rho^{-2})^{\frac{\alpha+1}{2}} (\rho^2+\rho^{-2})^{\frac{1}{2}}}{\Gamma(N+2) (N+1)^{\frac{\alpha-1}{2}} \rho^{N+1} }
\sum_{m=0}^{\infty}  {\alpha \choose m} \frac{\Gamma(m+1)  \phi_m }{(\mathcal{D}_{\rho}) ^{m+1}}.  \nonumber \\
\label{proof4150}
\end{eqnarray}
We estimate the infinity sum next. When $m \geqslant 2$, by (\ref{proof414}),
$$\frac{\Gamma(m+1)  \phi_m }{(\mathcal{D}_{\rho}) ^{m+1}} \leqslant \frac{\Gamma(m+1) 2^{m-2}}{\Gamma(m-1) (\mathcal{D}_{\rho})^{m+1}} \phi_2=\frac{1}{4 \mathcal{D}_{\rho}} \frac{(m-1)m \cdot 2^{m}}{(\mathcal{D}_{\rho})^{m}} \phi_2$$
if we strictly have $\mathcal{D}_{\rho}>2$, i.e. $\rho>3+2\sqrt{2}$, then $\exists c_{\rho},d_{\rho}>0$, s.t.
$$\frac{2d_{\rho}}{\mathcal{D}_{\rho}}<1, \ \hbox{and} \ (m-1)m \leqslant c_{\rho}(d_{\rho})^m, $$
so we can have:
\begin{eqnarray}
\frac{\Gamma(m+1)  \phi_m }{(\mathcal{D}_{\rho}) ^{m+1}} \leqslant \phi_2 \frac{c_{\rho}}{4 \mathcal{D}_{\rho}} (\frac{2d_{\rho}}{\mathcal{D}_{\rho}})^m .
\label{proof415}
\end{eqnarray}
Therefore, by (\ref{proof415}), the infinity sum in (\ref{proof4150}) can be estimated below:
\begin{eqnarray}
&& \sum_{m=0}^{\infty}  {\alpha \choose m} \frac{\Gamma(m+1)  \phi_m }{(\mathcal{D}_{\rho}) ^{m+1}} = \frac{\phi_0}{\mathcal{D}_{\rho}}+\alpha \frac{\phi_1}{(\mathcal{D}_{\rho})^2}+\sum_{m=2}^{\infty}  {\alpha \choose m} \frac{\Gamma(m+1)  \phi_m }{(\mathcal{D}_{\rho}) ^{m+1}} \nonumber\\
&\leqslant& \frac{\phi_0}{\mathcal{D}_{\rho}}+\alpha \frac{\phi_1}{(\mathcal{D}_{\rho})^2}+ \frac{c_{\rho} \phi_2}{4 \mathcal{D}_{\rho}}\sum_{m=2}^{\infty}  {\alpha \choose m} (\frac{2d_{\rho}}{\mathcal{D}_{\rho}})^m \nonumber\\
&=& \frac{1}{\mathcal{D}_{\rho}}\phi_0+\frac{\alpha}{(\mathcal{D}_{\rho})^2}\phi_1+ \frac{c_{\rho}}{4 \mathcal{D}_{\rho}} (1+\frac{2d_{\rho}}{\mathcal{D}_{\rho}})^{\alpha}  \phi_2
\label{proof416}
\end{eqnarray}
Similarly, when $0<\alpha<1$, by (\ref{proof49}), (\ref{proof410}), (\ref{proof415}), we have:
\begin{eqnarray}
\vert ^R D^{\alpha}(u(x)-u_N(x)) \vert \lesssim  \frac{1}{\mathcal{D}_{\rho}}\phi_0 + \frac{c_{\rho}}{2 \mathcal{D}_{\rho}} (1+\frac{2d_{\rho}}{\mathcal{D}_{\rho}})^{\alpha}  \phi_1.
\label{proof417}
\end{eqnarray}
To sum up the analysis above, we have the following theorem. \\
\begin{theorem}
Let $u(x)$ be a function such that $(1-x^2)^{-\frac{\alpha}{2}}u(x)$ is analytic on and within the complex ellipse $\mathcal{E}_{\rho}$, where $\rho > 3+2\sqrt{2}$, and $u_N(x)$ be the GJF fractional interpolant of $u(x)$ at $\{ \xi_j^{\alpha} \}_{j=0}^N$, which are zero points of $P_{N+1}^{\frac{\alpha}{2},\frac{\alpha}{2}}(x)$. Then, for $0<\alpha<1,$ we obtain the following global error estimation:
\begin{eqnarray}
&&\max_{-1\leqslant x \leqslant 1}{\vert ^R D^{\alpha}(u-u_N)(x) \vert} \nonumber \\
&&\quad \quad \quad  \leqslant  M_v \frac{2\Gamma(\frac{\alpha+1}{2}) (1+\rho^{-2})^{\frac{\alpha+1}{2}} (\rho^2+\rho^{-2})^{\frac{1}{2}}}{\Gamma(\alpha+1)(\rho+\rho^{-1}-2)} (N+1)^{\frac{3\alpha+1}{2}} {\rho}^{-(N+1)}
\label{theorem421}
\end{eqnarray}
and the error estimation at the superconvergent points $\{ \xi_j^{\alpha} \}_{j=0}^N$:
\begin{eqnarray}
&&\max_{0\leqslant j \leqslant N}{\vert ^R D^{\alpha}(u-u_N)(\xi_j^{\alpha}) \vert} \nonumber\\
&&\quad \quad \quad \leqslant c \cdot M_v \frac{2 \Gamma(\frac{\alpha+1}{2}) (1+\rho^{-2})^{\frac{\alpha+1}{2}} (\rho^2+\rho^{-2})^{\frac{1}{2}}}{\Gamma(\alpha+1)(\rho+\rho^{-1}-2)} (N+1)^{\alpha-1} {\rho}^{-(N+1)},
\label{theorem422}
\end{eqnarray}
where $M_v=\sup_{z \in \mathcal{E}_{\rho}} \{ (1-z^2)^{-\frac{\alpha}{2}}u(z) \}$, $c=\frac{c_{\rho} D_{(\alpha-1)/2}}{2(\alpha-1)} (1+\frac{2 d_{\rho}}{\mathcal{D}_{\rho}})^{\alpha}$. \\
For $1<\alpha<2$, the global error estimation and the error estimation at the superconvergent points $\{ \xi_j^{\alpha} \}_{j=0}^N$ are given  respectively in the following:
\begin{eqnarray}
&&\max_{-1\leqslant x \leqslant 1}{\vert ^R D^{\alpha}(u-u_N)(x) \vert} \nonumber \\
&&\quad \quad \quad \leqslant M_v \frac{2\Gamma(\frac{\alpha+1}{2}) (1+\rho^{-2})^{\frac{\alpha+1}{2}} (\rho^2+\rho^{-2})^{\frac{1}{2}}}{\Gamma(\alpha+1)(\rho+\rho^{-1}-2)} (N+1)^{\frac{3\alpha+1}{2}} {\rho}^{-(N+1)};
\label{theorem423}
\end{eqnarray}
\begin{eqnarray}
&&\max_{0\leqslant j \leqslant N}{\vert ^R D^{\alpha}(u-u_N)(\xi_j^{\alpha}) \vert} \nonumber \\
&&\quad \quad \quad \leqslant c \cdot M_v \frac{2 \Gamma(\frac{\alpha+1}{2}) (1+\rho^{-2})^{\frac{\alpha+1}{2}} (\rho^2+\rho^{-2})^{\frac{1}{2}}}{\Gamma(\alpha+1)(\rho+\rho^{-1}-2)} (N+1)^{\frac{3\alpha-3}{2}} {\rho}^{-(N+1)},
\label{theorem424}
\end{eqnarray}
where $c=\frac{\alpha}{\mathcal{D}_{\rho}}+\frac{c_{\rho}}{2} (1+\frac{2 d_{\rho}}{\mathcal{D}_{\rho}})^{\alpha}$.
\end{theorem}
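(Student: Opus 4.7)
The plan is to build directly on the already-prepared estimate (\ref{proof4150}), which bounds $|{}^R D^\alpha (u-u_N)(x)|$ by an explicit $\rho$-dependent prefactor times the series $\sum_{m=0}^\infty \binom{\alpha}{m}\Gamma(m+1)\phi_m/\mathcal{D}_\rho^{m+1}$, together with the truncations of that series obtained in (\ref{proof416}) for $1<\alpha<2$ and (\ref{proof417}) for $0<\alpha<1$. The entire argument then reduces to three mechanical steps: (i) tracking the $N$-asymptotics of the prefactor via Stirling, (ii) inserting the sharp $\phi_0,\phi_1,\phi_2$ bounds of (\ref{proof47})--(\ref{proof414}), and (iii) for the superconvergence statements, exploiting the vanishing of $\phi_0$ at the nodes.

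For the two global bounds (\ref{theorem421}) and (\ref{theorem423}), Stirling gives $\Gamma(N+2+\alpha)/\Gamma(N+2)\sim (N+2)^\alpha$, so the prefactor in (\ref{proof4150}) is of order $(N+1)^{(\alpha+1)/2}\rho^{-(N+1)}$ up to an explicit $\rho$-constant. The dominant contribution to the truncated series comes from $\phi_0\sim (N+1)^\alpha/\Gamma(\alpha+1)$ via (\ref{proof47}) or (\ref{proof411}); multiplying yields the asserted $(N+1)^{(3\alpha+1)/2}\rho^{-(N+1)}$ rate. One must also verify that the remaining $m\geq 1$ pieces are strictly smaller: for $0<\alpha<1$, (\ref{proof48}) gives $\phi_1=O(N^{(\alpha-3)/2})$; for $1<\alpha<2$, $\phi_1=O(N^{\alpha-2})$ from (\ref{proof412}) and $\phi_2\leq 2\phi_1+o(\phi_1)$ from (\ref{proof413}). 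In both regimes the tail $m\geq 2$ is geometrically summable by (\ref{proof415}), and this is exactly where the hypothesis $\rho>3+2\sqrt{2}$ is used.

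For the superconvergence estimates (\ref{theorem422}) and (\ref{theorem424}), the pivotal observation is that $\phi_0(x)=C_{N+1}^{(\alpha+1)/2}(x)=c_{((\alpha+1)/2,N+1)}P_{N+1}^{\alpha/2,\alpha/2}(x)$, which vanishes at each $\xi_j^\alpha$. Hence, when (\ref{proof4150}) is evaluated at $x=\xi_j^\alpha$, the $m=0$ term disappears entirely, and the truncated bound (\ref{proof416}) collapses to $(\alpha/\mathcal{D}_\rho^2)\phi_1 + (c_\rho/(4\mathcal{D}_\rho))(1+2d_\rho/\mathcal{D}_\rho)^\alpha\phi_2$; analogously, (\ref{proof417}) collapses to the $\phi_1$ term alone. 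Substituting $\phi_1\leq D_{(\alpha-1)/2}(N+1)^{(\alpha-3)/2}/(\alpha-1)$ gives the $(N+1)^{\alpha-1}\rho^{-(N+1)}$ rate of (\ref{theorem422}) for $0<\alpha<1$, while $\phi_1\sim(N+1)^{\alpha-2}/\Gamma(\alpha+1)$ together with $\phi_2\leq 2\phi_1+o(\phi_1)$ produces the $(N+1)^{(3\alpha-3)/2}\rho^{-(N+1)}$ rate of (\ref{theorem424}) for $1<\alpha<2$; identifying the numerical coefficients then matches the stated values of $c$.

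The main obstacle is bookkeeping rather than mathematical depth, but two points warrant extra care. First, the estimate $\phi_m\leq 2^{m-1}\phi_1/\Gamma(m)$ of (\ref{proof410}) and its $1<\alpha<2$ analogue (\ref{proof414}) must be verified uniformly in $x\in[-1,1]$, so that exactly the same tail bound applies both at a generic $x$ and at the nodes $\xi_j^\alpha$. Second, the bound $C_0=o(\phi_1)$ needed when $1<\alpha<2$ is, as the excerpt concedes, verified only numerically from Figs. 4.1--4.3; for a rigorous argument I would extract the leading-order Stirling asymptotic of ${}_{N+1}^{(\alpha/2,\alpha/2)}c_0^{(\alpha/2,1-\alpha/2)}$ from the explicit finite-sum representation recalled just above Fig. 4.1, which yields the $N^{\alpha-3}$ decay and hence $C_0\lesssim N^{\alpha/2-3}$, strictly smaller than $\phi_1\sim N^{\alpha-2}$.
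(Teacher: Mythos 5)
Your proposal follows essentially the same route as the paper's own proof: it combines the prepared bound (\ref{proof4150}) with the truncated sums (\ref{proof416})--(\ref{proof417}) and the estimates (\ref{proof47})--(\ref{proof414}) for the global rates (\ref{theorem421}), (\ref{theorem423}), and it obtains the superconvergence rates (\ref{theorem422}), (\ref{theorem424}) from the vanishing of $\phi_0(x)=c_{(\frac{\alpha+1}{2},N+1)}P_{N+1}^{\frac{\alpha}{2},\frac{\alpha}{2}}(x)$ at the nodes $\xi_j^{\alpha}$, exactly as the paper does. Your two cautionary remarks --- that the tail bounds on $\phi_m$ are uniform in $x$ (automatic, since $\phi_m$ is defined as the $L^{\infty}[-1,1]$ norm) and that the paper's verification of $C_0=o(\phi_1)$ is only numerical, for which you sketch a Stirling-based rigorous substitute --- are sound refinements of the same argument rather than a different approach.
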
 \\

\begin{proof}
We can derive (\ref{theorem421}) from (\ref{proof417}), (\ref{proof47}), (\ref{proof48}) and derive
(\ref{theorem423}) from (\ref{proof416}), (\ref{proof411}), (\ref{proof412}), (\ref{proof413}).
As for (\ref{theorem422}) and (\ref{theorem424}),  we have,
$$\phi_0(\xi_j^{\alpha})=P_{N+1}^{\frac{\alpha}{2},\frac{\alpha}{2}}(\xi_j^{\alpha})=0,  \ j=0,\ldots,N,$$
so the first term in (\ref{proof416}) vanishes, i.e.
$$\max_{0\leqslant j \leqslant N}{\vert ^R D^{\alpha}(u-u_N)(\xi_j^{\alpha}) \vert} \leqslant c_{\rho,N} (\frac{\alpha}{(\mathcal{D}_{\rho})^2}\phi_1+ \frac{c_{\rho}}{4 \mathcal{D}_{\rho}} (1+\frac{2d_{\rho}}{\mathcal{D}_{\rho}})^{\alpha}  \phi_2),$$
where $c_{\rho,N}= M_v \frac{\Gamma(N+2+\alpha)\Gamma(\frac{\alpha+1}{2}) (1+\rho^{-2})^{\frac{\alpha+1}{2}} (\rho^2+\rho^{-2})^{\frac{1}{2}}}{\Gamma(N+2) (N+1)^{\frac{\alpha-1}{2}} \rho^{N+1} }$, which establishes (\ref{theorem424}). Similarly, (\ref{theorem422}) follows from (\ref{proof417}), (\ref{proof411}), and (\ref{proof412}).
\end{proof} \\

When $\alpha>2$, since $$^R D^{\alpha}=D^{\alpha-\alpha^*} (^RD^{\alpha^*})$$
where $\alpha-\alpha^*$ is an even integer, we can generalize the results.\\

\begin{corollary}
Let $\alpha \in (k-1,k)$, $k \in \mathbb{Z}^+$, $k \ll N$, and $\alpha^*$ be defined in (\ref{def41}). Under the same assumptions in Theorem 4.2, we have: \\
when k is odd,
\begin{eqnarray}
\max_{-1\leqslant x \leqslant 1}{\vert ^R D^{\alpha}(u-u_N)(x) \vert}  \lesssim (N-k+2)^{\frac{3\alpha^*+1}{2}+2(k-1)} \rho^{-(N-k+2)},
\label{cor431}
\end{eqnarray}
the superconvergent points $\{\xi_{i}^{\alpha} \}_{i=0}^{N-k+1}$ are the zero points of $P_{N-k+2}^{\frac{\alpha^*}{2}+k-1,\frac{\alpha^*}{2}+k-1}(x)$, and at those points, we have:
\begin{eqnarray}
\max_{0\leqslant i \leqslant N}{\vert ^R D^{\alpha}(u-u_N)(\xi_i^{\alpha}) \vert}  \lesssim (N-k+2)^{\frac{3\alpha^*+1}{2}+2(k-2)} \rho^{-(N-k+2)};
\label{cor432}
\end{eqnarray}
when k is even,
\begin{eqnarray}
\max_{-1\leqslant x \leqslant 1}{\vert ^R D^{\alpha}(u-u_N)(x) \vert}  \lesssim (N-k+3)^{\frac{3\alpha^*+1}{2}+2(k-2)} \rho^{-(N-k+3)},
\label{cor433}
\end{eqnarray}
the superconvergent points $\{\xi_{i}^{\alpha} \}_{i=0}^{N-k+2}$ are the zero points of $P_{N-k+3}^{\frac{\alpha^*}{2}+k-2,\frac{\alpha^*}{2}+k-2}(x)$, and at those points, we have:
\begin{eqnarray}
\max_{0\leqslant i \leqslant N}{\vert ^R D^{\alpha}(u-u_N)(\xi_i^{\alpha}) \vert}  \lesssim (N-k+2)^{\frac{3\alpha^*+1}{2}+2(k-3)} \rho^{-(N-k+3)},
\label{cor434}
\end{eqnarray}
where the constants only depend on $\alpha$, $\rho$.
\end{corollary}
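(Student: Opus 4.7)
The plan is to exploit the identity ${}^R D^\alpha = D^{\alpha-\alpha^*} \circ {}^R D^{\alpha^*}$ stated just before the corollary. By construction $\alpha - \alpha^*$ is a nonnegative even integer, so $D^{\alpha-\alpha^*}$ is merely an ordinary integer derivative and the corollary reduces to Theorem 4.2 followed by a bounded number of classical differentiations. Concretely, when $k$ is odd one has $\alpha^* = \alpha-k+1 \in (0,1)$ and one differentiates ${}^R D^{\alpha^*}(u-u_N)$ exactly $k-1$ times; when $k$ is even $\alpha^* = \alpha-k+2 \in (1,2)$ and one differentiates $k-2$ times. In either case the GJF interpolant and its contour representation (4.5) from the proof of Theorem 4.2 apply verbatim with $\alpha$ replaced by $\alpha^*$, so the decomposition into the functions $\phi_0,\phi_1,\phi_2,\dots$ remains available.

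The second step is to identify the new leading term after integer differentiation. Recalling from Lemma 2.8 that the dominant contribution in (4.5) is proportional to $P_{N+1}^{\alpha^*/2,\alpha^*/2}(x)$, I would apply the classical Jacobi derivative rule
$$\frac{d^m}{dx^m} P_n^{a,a}(x) = \frac{\Gamma(n+m+2a+1)}{2^m\,\Gamma(n+2a+1)}\, P_{n-m}^{a+m,\,a+m}(x)$$
to conclude that after $k-1$ (resp.\ $k-2$) derivatives the leading term is a positive multiple of $P_{N-k+2}^{\alpha^*/2+k-1,\,\alpha^*/2+k-1}(x)$ (resp.\ $P_{N-k+3}^{\alpha^*/2+k-2,\,\alpha^*/2+k-2}(x)$). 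This pins down the superconvergence points as the zeros of these Jacobi polynomials, exactly as claimed in the statement. At those nodes the leading term vanishes and the next-order term of the expansion becomes dominant, which will account for the drop of two units in the $N$-exponent between (4.20)/(4.22) and (4.21)/(4.23), in complete parallel with the proof of Theorem 4.2.

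For the quantitative estimates, each of the $\alpha-\alpha^*$ integer differentiations under the contour integral contributes (i) an extra factor $(z-x)^{-1}$ in the integrand costing $O(\mathcal{D}_\rho^{-1})$ and (ii) a shift of the Gegenbauer/Jacobi index from $\lambda$ to $\lambda+1$, which, by (2.7)--(2.8) together with the differentiation rule (2.6), inflates the sup-norm of the derivative term by $O(N^2)$ (one $N$ from the derivative coefficient and one from the increased growth rate of the Gegenbauer polynomial). Collecting $k-1$ or $k-2$ such factors and combining with the base estimates (4.18)--(4.19) produces the exponents $\tfrac{3\alpha^*+1}{2}+2(k-1)$ and $\tfrac{3\alpha^*+1}{2}+2(k-2)$ displayed in (4.20)--(4.23), while the vanishing of the leading Jacobi factor at the interpolation nodes drops these exponents by $2$, yielding (4.22) and (4.24).

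The main technical obstacle will be showing that the tail-sum estimates (4.14)--(4.15), which rely crucially on $\rho > 3+2\sqrt{2}$, remain valid after $k-1$ integer differentiations uniformly in $k \ll N$: the polynomial factors $(m-1)m\cdots(m-k+2)$ generated by successively differentiating the infinite series must still be absorbed by the geometric ratio $(2d_\rho/\mathcal{D}_\rho)^m < 1$. Because $k$ is fixed and small compared to $N$, this should go through by enlarging the constants $c_\rho, d_\rho$ by a $k$-dependent but $N$-independent amount, but the careful bookkeeping of these constants and the verification that all remainder series converge geometrically with the same $\rho^{-(N-k+2)}$ (resp.\ $\rho^{-(N-k+3)}$) decay is the delicate point.
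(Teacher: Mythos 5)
Your proposal is correct and follows essentially the same route as the paper's proof: factor ${}^{R}D^{\alpha}=D^{\alpha-\alpha^*}\,{}^{R}D^{\alpha^*}$, reuse the contour/Leibniz machinery of (4.5) from Theorem 4.2, identify the differentiated leading term as a multiple of $P_{N-k+2}^{\frac{\alpha^*}{2}+k-1,\frac{\alpha^*}{2}+k-1}$ (odd $k$; analogously for even $k$) via the Jacobi/Gegenbauer differentiation rule, and count an $O(N^2)$ inflation per integer derivative to get the stated exponents. The only real difference is bookkeeping: the paper absorbs the integer derivatives into the fractional order, writing $\phi_m(x)\propto D^{k-1}\,{}^{R}D_{o}^{\alpha^*-m}[\,\cdot\,]={}^{R}D_{o}^{\alpha-m}[\,\cdot\,]$, so the order-$\alpha$ Leibniz series applies verbatim and your product-rule cross terms with extra $(z-x)^{-1}$ factors never arise (each derivative hits either the kernel or the Gegenbauer factor, and the kernel terms are lower order), while your flagged tail-sum issue is precisely the paper's terse assertion $\phi_m=o(\phi_1)$ for $m\geqslant 2$, which it dispatches ``similar to the proof of Theorem 4.2'' using (4.15), exactly the constant-enlargement argument you sketch.
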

\begin{proof}
When $k$ is odd, by (\ref{def22}), (\ref{def46}), and Lemma \ref{lemma28},
$$\phi_m(x)=D^{k-1} \frac{\Gamma(N+2)}{\Gamma(N+2+\alpha^*)} {^R D_{o}^{\alpha^*-m}} [c_{(\alpha^*,N+1)}\mathcal{J}_{N+1}^{\frac{\alpha^*}{2},\frac{\alpha^*}{2}}(x)]$$
From (\ref{gegenprop1}), the leading term is
$$\phi_0(x)=(\alpha+1)\cdots(\alpha-1+2k)C_{N-k+2}^{(\frac{\alpha^*+1}{2}+k-1)}(x), \  \phi_0 \sim N^{\alpha+1+2(k-1)}$$
and the second term is
$$\phi_1(x)=(\alpha+1)\cdots(\alpha-3+2k)C_{N-k+1}^{(\frac{\alpha^*+1}{2}+k-2)}(x), \  \phi_1 \sim N^{\alpha+1+2(k-2)}$$
and $\phi_m=o(\phi_1)$, for $m \geqslant 2$.
Therefore, the leading term vanishes at $\{\xi_{i}^{\alpha} \}_{i=0}^{N-k+1}$, zero points of $P_{N-k+2}^{\frac{\alpha^*}{2}+k-1,\frac{\alpha^*}{2}+k-1}(x)$. Similar to the proof of Theorem 4.2, the estimates (\ref{cor431}) and (\ref{cor432}) are derived from (\ref{proof417}).\\
When $k$ is even,
$$\phi_m(x)=D^{k-2} \frac{\Gamma(N+2)}{\Gamma(N+2+\alpha^*)} {^R D_{e}^{\alpha^*-m}} [c_{(\alpha^*,N+1)}\mathcal{J}_{N+1}^{\frac{\alpha^*}{2},\frac{\alpha^*}{2}}(x)],$$
and the rest of the proof is similar to the case when $k$ is odd.
\end{proof}

\subsection{Numerical Validations}
To make sure $v(x)$ is smooth enough, in this subsection, we consider the function:
$$u(x)=\frac{(1-x^2)^{\frac{\alpha}{2}}}{1+(x+3)^2}$$
\begin{figure}[htbp] %  figure placement: here, top, bottom, or page
   \centering
   \includegraphics[width=4in]{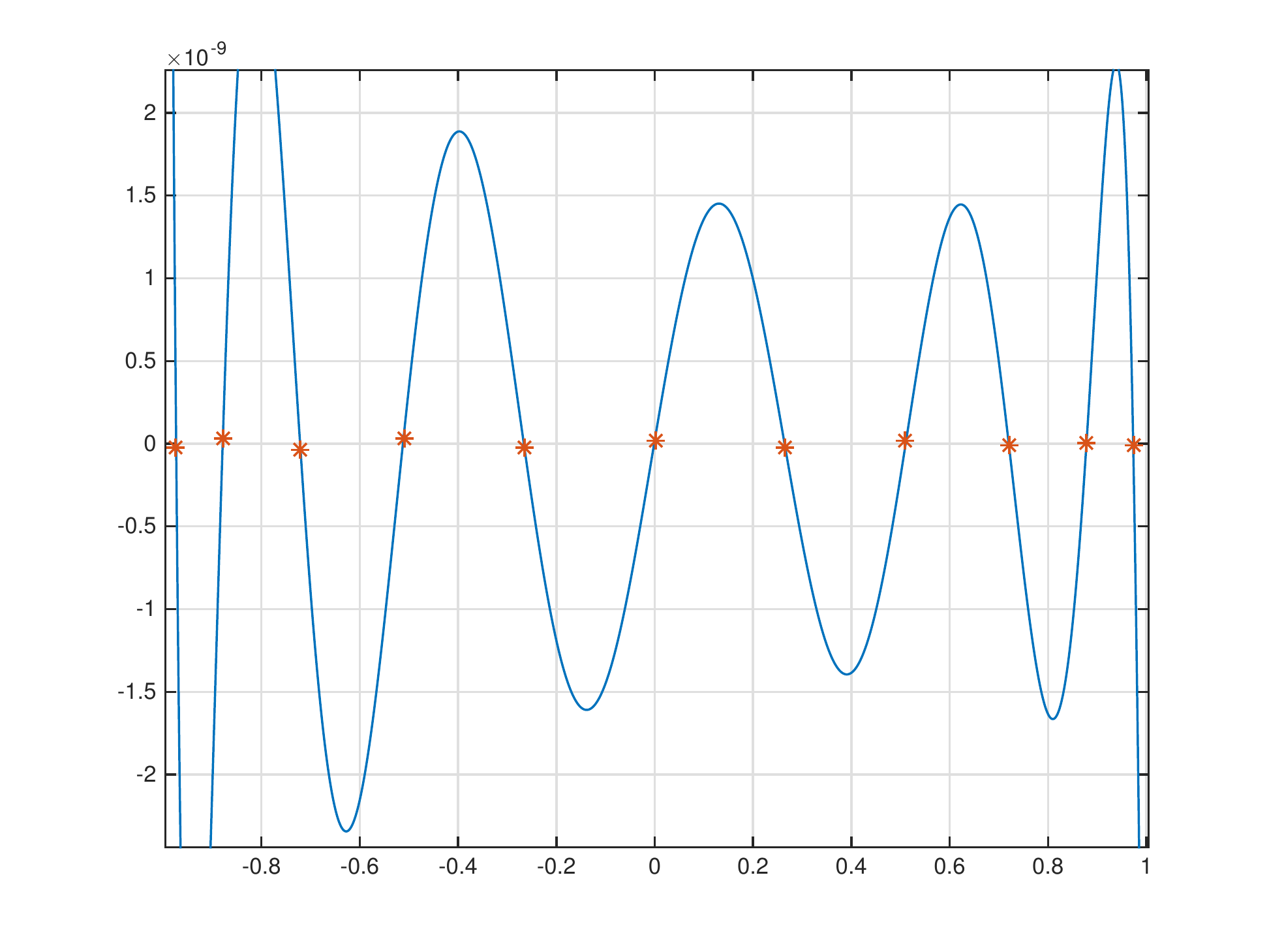}
   \caption{Curves $^RD^\alpha (u-u_{10})(x)$ for the GJF interpolation at 11 pints: $\alpha=0.4$}
\end{figure}
\begin{figure}[htbp] %  figure placement: here, top, bottom, or page
   \centering
      \includegraphics[width=4in]{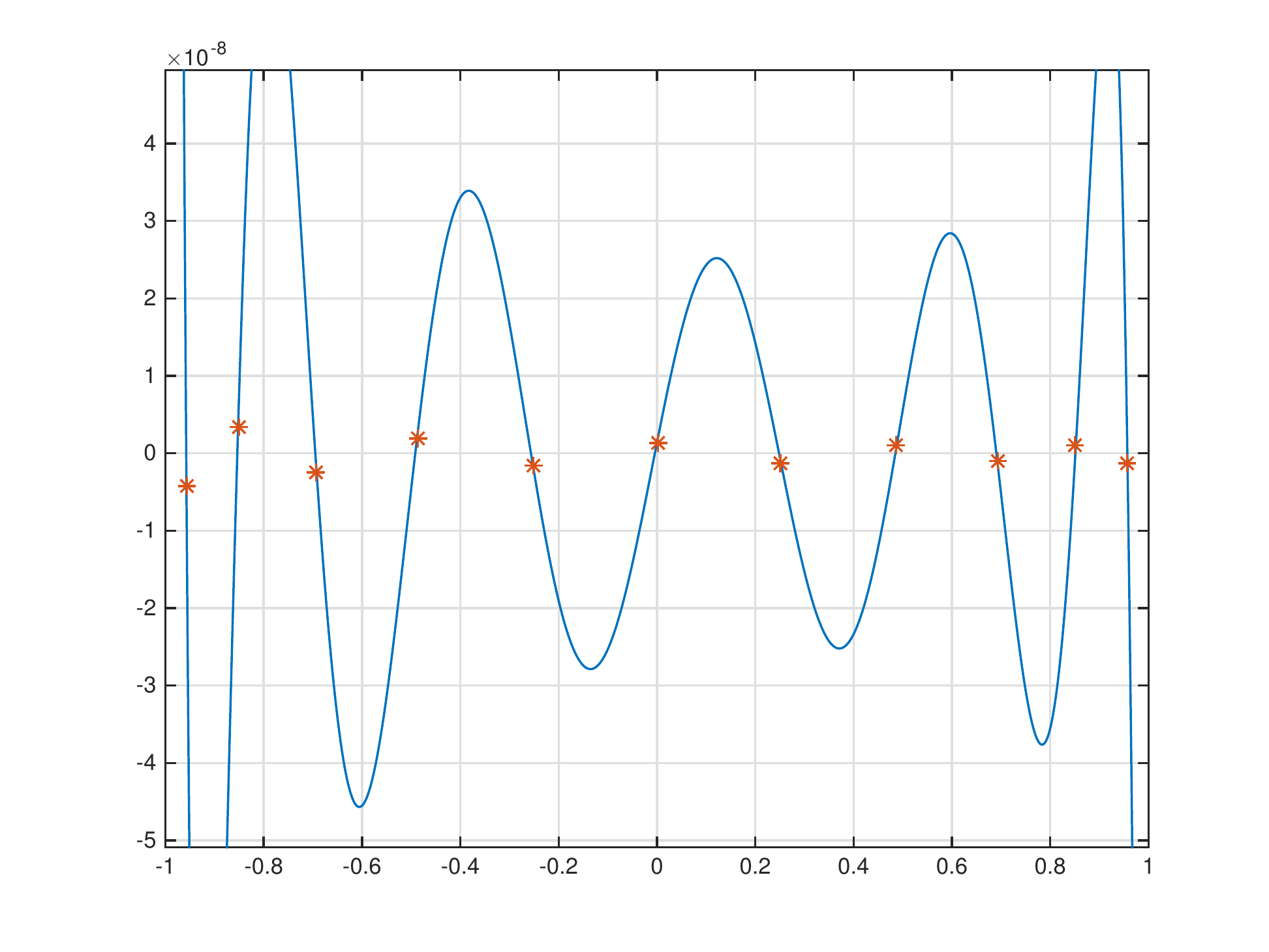}
   \caption{ Curves $^RD^\alpha (u-u_{10})(x)$ for the GJF interpolation at 11 pints: $\alpha=1.7$}
\end{figure}
where $v(x)=\frac{1}{1+(x+3)^2}$, and we set $\alpha =0.4, 1.7$ respectively. It's easy to see that $v(x)$ has two simple poles at $z=-3 \pm i$ in the complex plane. Hence it is analytic within the Berstein ellipse with $\rho > 3+2\sqrt{2}$. In the numerical example, we set $N=10$ so $v_N(x)$ is interpolated at 11 zero points $\{ \xi_j^{\alpha} \}_{j=0}^{10}$ of $P_{11}^{\frac{\alpha}{2},\frac{\alpha}{2}}(x)$. The true solution of $^RD^{\alpha}u(x)$ is approximated by the sum of 40 terms. Fig. 4.4 and 4.5 depict graphs of $^RD^{\alpha}(u-u_{10})(x)$, where $u_N(x)$ is the GJF fractional interpolation, and $\alpha=0.4$  and $1.7$, respectively. According to Theorem 4.2, the 11 interpolation points are predicted as superconvergent points. Similar with Fig. 3.1, the errors at those superconvergent points are significantly less than the global maximal error.

\section{Applications}
In this section, we focus on applications of superconvergence. Let $1<\alpha<2$, and we consider the following FDE:
\begin{eqnarray}
\left\{\begin{array}{ll}
^RD^{\alpha}u(x)+u(x)=f(x),  \ x \in (-1,1)\\
u(-1)=u(1)=0
\end{array}\right.
\end{eqnarray}
We provide two methods to solve for the equation: Petrov-Galerkin method and spectral collocation method. Our goal is to observe superconvergence phenomenon in numerical solutions. In the following numerical examples, we set $f(x)$ be the function such that $u(x)=\frac{(1-x^2)^{\frac{\alpha}{2}}}{1+0.5x^2}$ is the true solution. Then we demonstrate the error curve $^RD^{\alpha}(u-u_N)$ and highlight, by $'*'$, its value at the superconvergent points predicted in Theorem 4.2.

\subsection{Petrov-Galerkin Method}
For any given $1<\alpha<2$, we are looking for
$$u_N \in S_{\alpha}= span \{ \mathcal{J}^{-\frac{\alpha}{2},-\frac{\alpha}{2}}_0, \cdots,  \mathcal{J}^{-\frac{\alpha}{2},-\frac{\alpha}{2}}_N \},$$
such that $\forall v \in \mathbb{P}_N[-1,1]$, we have:
\begin{eqnarray}
(^RD^{\alpha}u_N, v)_{\omega^{\frac{\alpha}{2},\frac{\alpha}{2}}}+(u_N, v)_{\omega^{\frac{\alpha}{2}\frac{\alpha}{2}}}=(f, v)_{\omega^{\frac{\alpha}{2},\frac{\alpha}{2}}}. \label{example4211}
\end{eqnarray}
According to (\ref{lemma212}), by setting $v=P_i^{\frac{\alpha}{2},\frac{\alpha}{2}}$, $i=0,1,\ldots,N$,  (\ref{example4211}) is equivalent to find $(c_0,c_1,\ldots, c_N)^T \in \mathbb{R}^{N+1}$, such that, for $i=0,1,\ldots,N$,
\begin{equation}
\sum_{j=0}^N c_j [d_j (P_j^{\frac{\alpha}{2},\frac{\alpha}{2}}, P_i^{\frac{\alpha}{2},\frac{\alpha}{2}})_{\omega^{\frac{\alpha}{2},\frac{\alpha}{2}}} + (P_j^{\frac{\alpha}{2},\frac{\alpha}{2}}, P_i^{\frac{\alpha}{2},\frac{\alpha}{2}})_{\omega^{\alpha,\alpha}}] =
(f, P_i^{\frac{\alpha}{2},\frac{\alpha}{2}})_{\omega^{\frac{\alpha}{2},\frac{\alpha}{2}}},
\end{equation}
where $d_j=-\frac{\Gamma(j+1+\alpha)}{\Gamma(j+1)}$. We observe that the stiffness matrix is diagonal and dominates the system.

\begin{figure}[htbp] %  figure placement: here, top, bottom, or page
   \centering
   \includegraphics[width=4in]{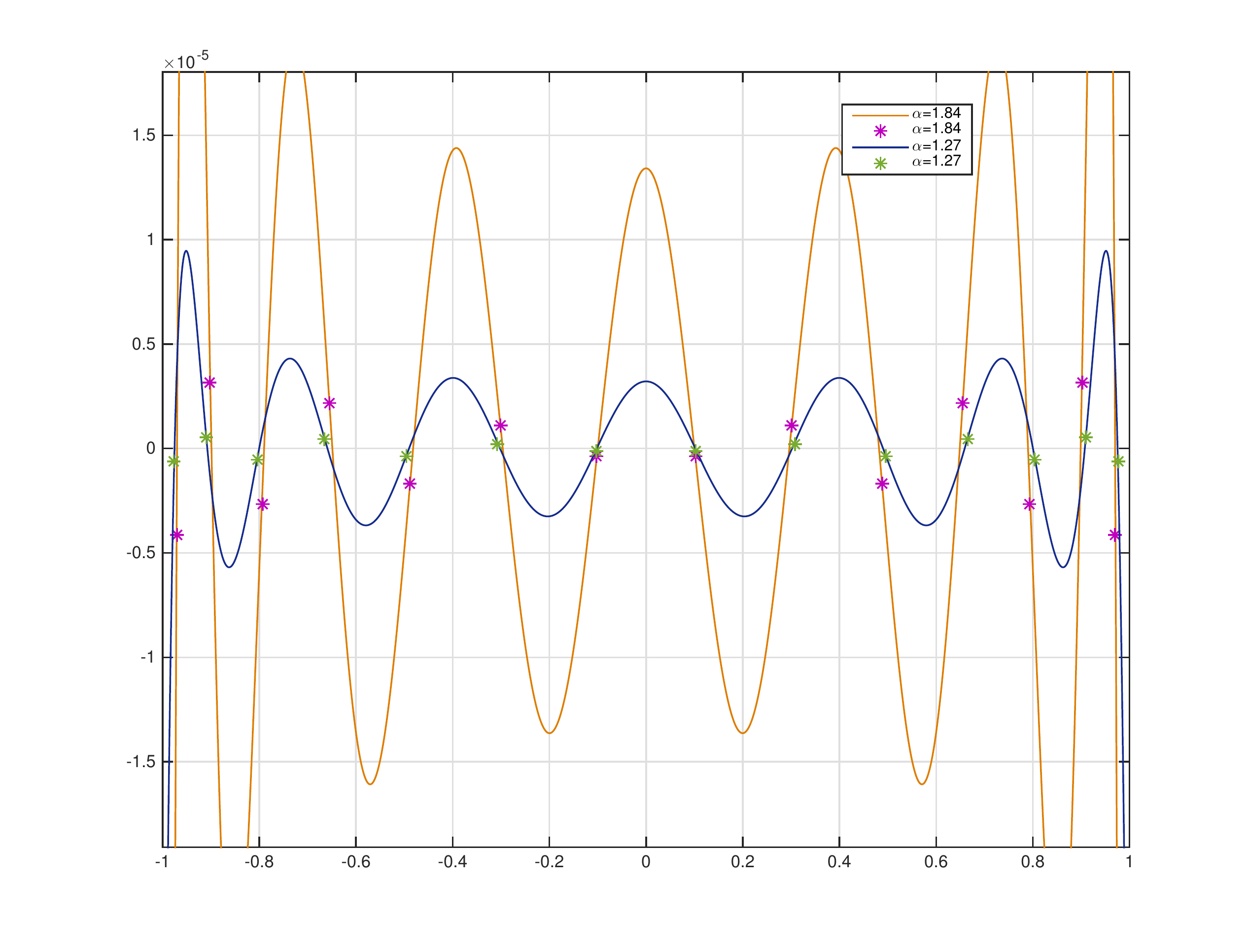}
   \caption{Curves $^RD^\alpha (u-u_{13})(x)$ for the Petrov-Galerkin method: $\alpha=$1.27 and 1.84}
\end{figure}
\begin{figure}[htbp] %  figure placement: here, top, bottom, or page
   \centering
      \includegraphics[width=4in]{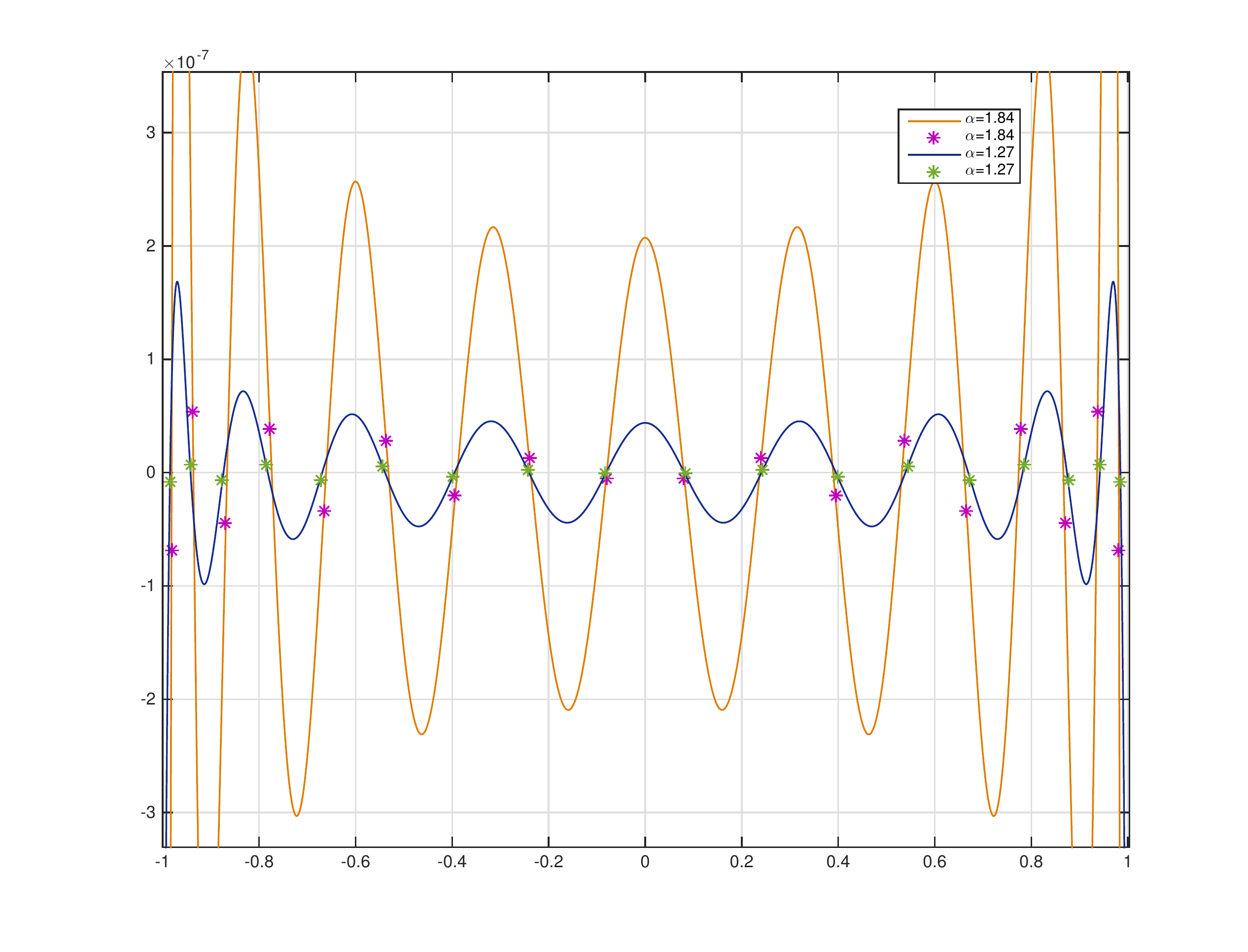}
   \caption{Curves $^RD^\alpha (u-u_{17})(x)$ for the Petrov-Galerkin method: $\alpha=$1.27 and 1.84}
\end{figure}
\begin{figure}[htbp] %  figure placement: here, top, bottom, or page
   \centering
      \includegraphics[width=4in]{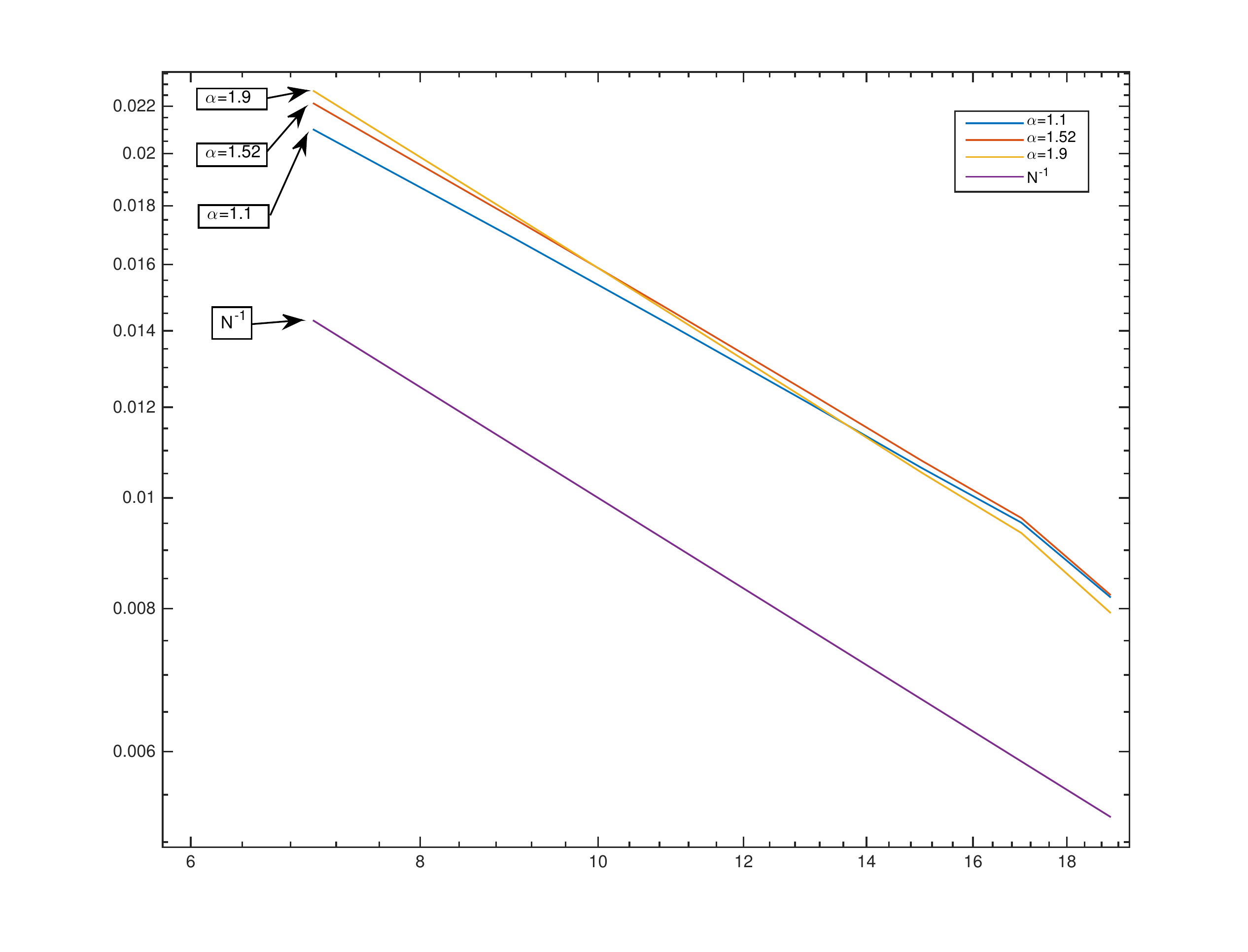}
   \caption{Superconvergent ratios of the Petrov-Galerkin method for different $\alpha$-derivatives.}
\end{figure}

We plotted error curves $^RD^\alpha (u-u_N)$ in Figures 5.1-5.2 for $\alpha=1.27,1.84$, $N=13,17$, respectively. According to Theorem 4.2, the superconvergence points are predicted to be zeros of $P^{\frac{\alpha}{2},\frac{\alpha}{2}}_{N+1}(x)$. We observe that errors at those points are much smaller than the global maximal error, and moreover, both the global maximal error and errors at the superconvergence points increase, when $\alpha$ increases. Fig. 5.3 depicted the reciprocal of (\ref{ratio1}), for $\alpha=1.1,1.52,1.9$, respectively, where $O(N^{-1})$ is plotted as a reference slope (Since they are too close to each other, only three $\alpha$ cases are shown in Fig. 5.3). We see that, at the superconvergence points predicted by Theorem 4.2, the convergence rate is $O(N^{-1})$ faster than the optimal global rate.

\subsection{Spectral Collocation Method}
For any given $1<\alpha<2$, according to the definition of GJF fractional interpolation with order $\frac{\alpha}{2}$, we have:
\begin{eqnarray}
u_N(x)=\sum_{j=0}^N \hat{{\ell}}_j(x) {v_j}:=\sum_{j=0}^N (1-x^2)^{\frac{\alpha}{2}} {\ell}_j(x) {v_j},
\end{eqnarray}
where $\ell_j \in \mathbb{P}_N[-1,1]$ is the Lagrange basis function satisfying
$${\ell}_j(x_i)=\delta_{ij}, \ i,j=0,1,\ldots,N.$$

\begin{figure}[htbp] %  figure placement: here, top, bottom, or page
   \centering
   \includegraphics[width=4in]{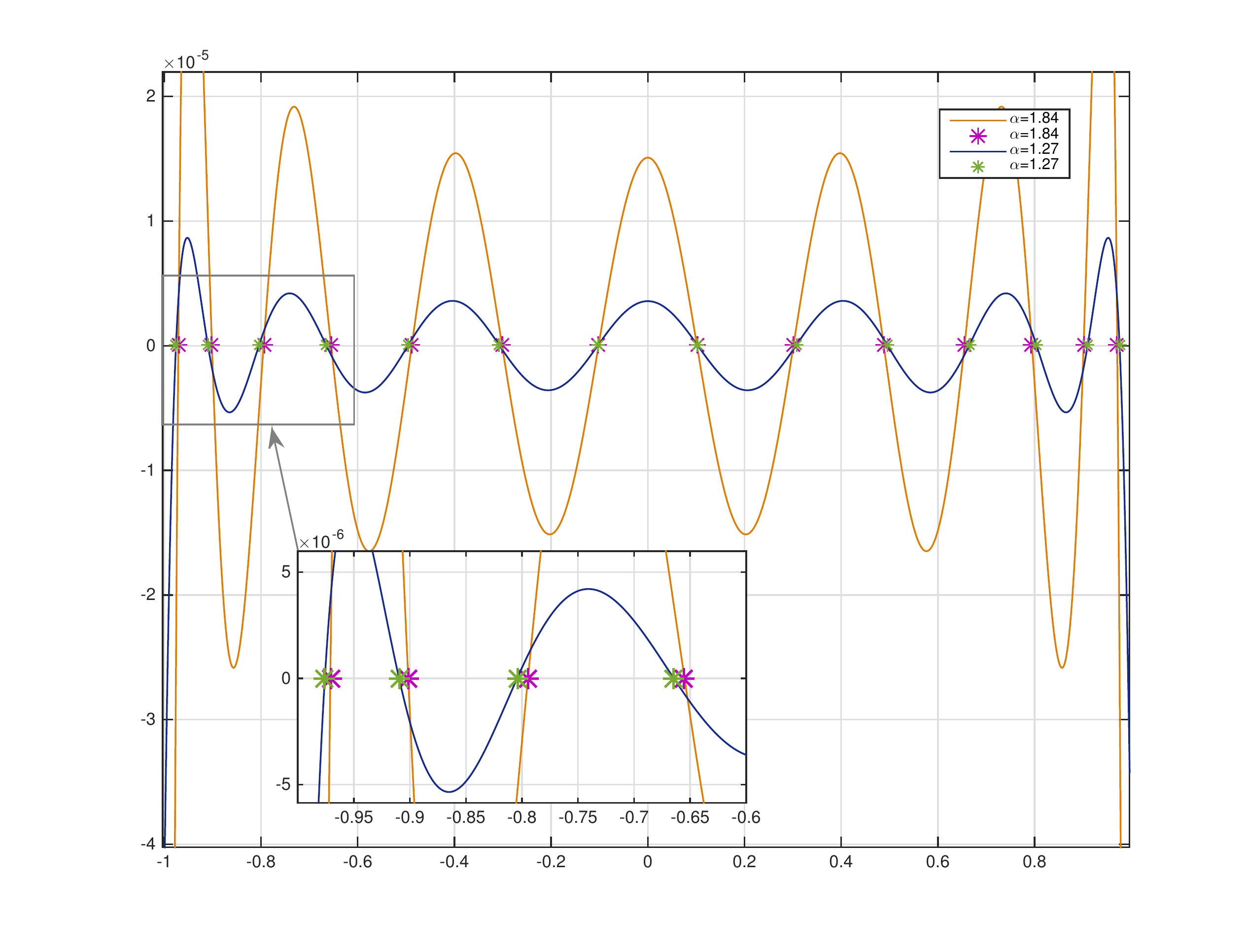}
   \caption{Curves $^RD^\alpha (u-u_{13})(x)$ for the collocation method: $\alpha=$1.27 and 1.84}
\end{figure}
\begin{figure}[htbp] %  figure placement: here, top, bottom, or page
   \centering
      \includegraphics[width=4in]{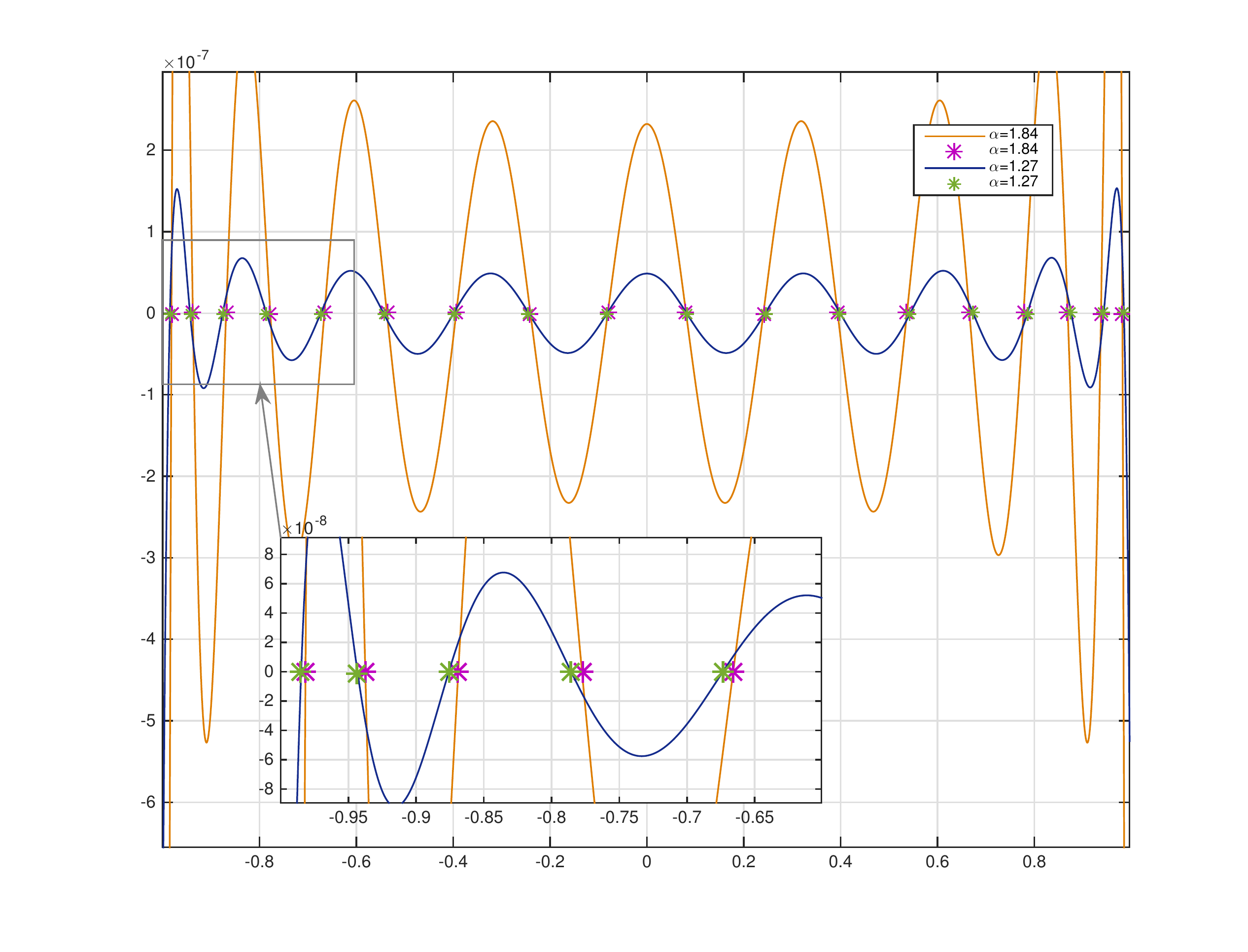}
   \caption{Curves $^RD^\alpha (u-u_{17})(x)$ for the collocation method: $\alpha=$1.27 and 1.84}
\end{figure}
\begin{figure}[htbp] %  figure placement: here, top, bottom, or page
   \centering
      \includegraphics[width=4in]{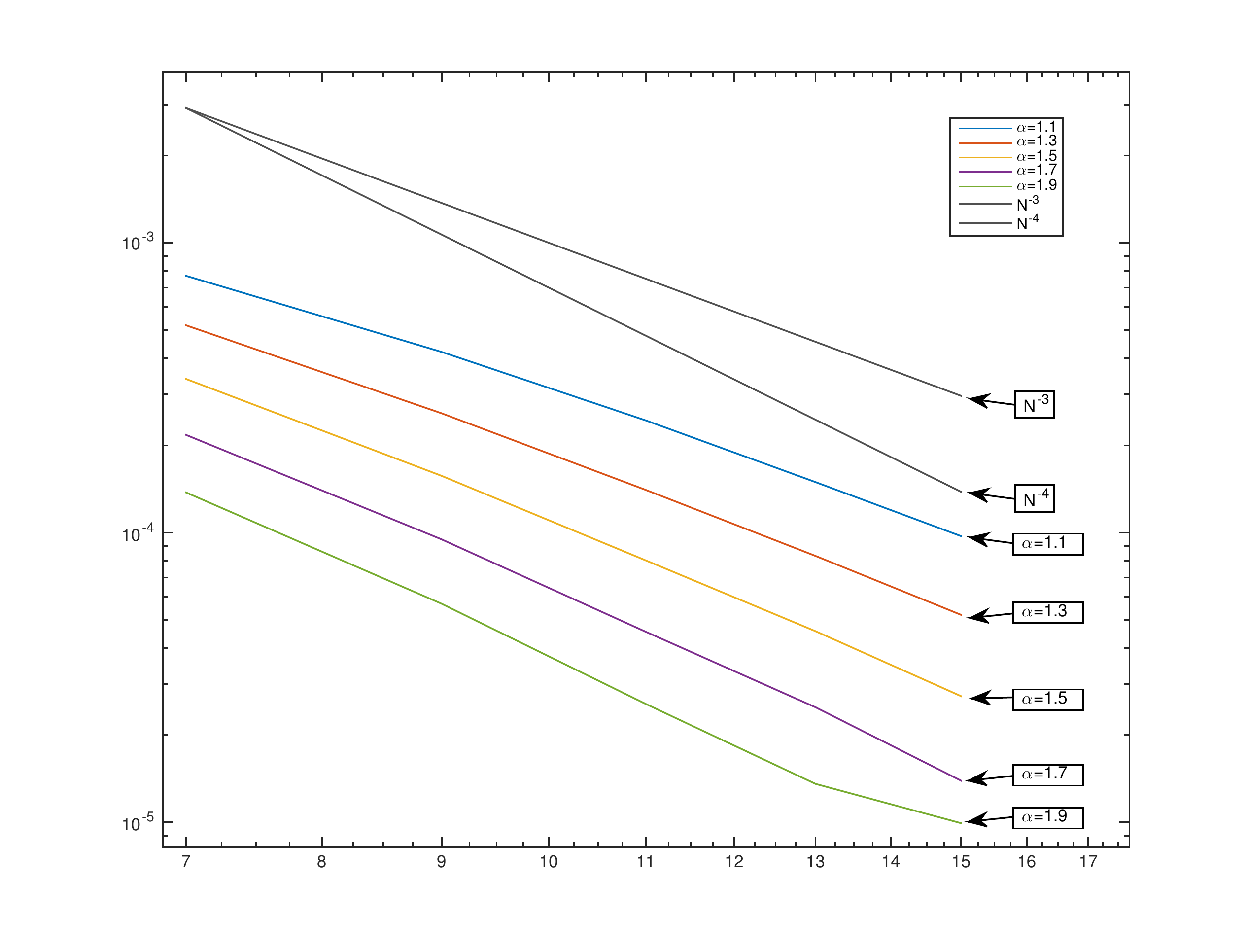}
   \caption{Superconvergent ratios of the collocation method for different $\alpha$-derivatives.}
\end{figure}

Therefore, we are looking for $V=(v_0,v_1, \cdots, v_N)^T \in \mathbb{R}^{N+1}$, such that
\begin{eqnarray}
(^RD^{\alpha}u_N)(x_i)+u_N(x_i)=f(x_i), \quad { i=0,1,\ldots,N,}
\label{example4221}
\end{eqnarray}
where $u_N(x_i)=(1-x_i^2)^{\frac{\alpha}{2}} v_i$. Then, (\ref{example4221}) is equivalent to solve the linear system:
\begin{eqnarray*}
(D+\Lambda)V=F
\end{eqnarray*}
where $D,\Lambda \in \mathcal{M}_{N+1}(\mathbb{R})$, $F \in \mathbb{R}^{N+1}$, and for $i,j=0,1,\ldots,N$,
\begin{eqnarray}
D(i,j)=(^RD^{\alpha}\hat{L}_i)(x_j), \ \ \Lambda(i,i)=(1-x_i^2)^{\frac{\alpha}{2}}, \ \ F(i)=f(x_i),
\end{eqnarray}
and the differential matrix $D$ can be analytically calculated by (\ref{lemma212}). Here, we set $\{ x_i \}_{i=0}^N$ to be zeros of $P^{\frac{\alpha}{2},\frac{\alpha}{2}}_{N+1}(x)$.

Error curves $^RD^\alpha (u-u_N)$ are plotted in Figures 5.4-5.5 for $\alpha=1.27,1.84$, $N=13,17$, respectively. As predicted by Theorem 4.2, the superconvergence points $\{ x_i \}_{i=0}^N$ are zeros of $P^{\frac{\alpha}{2},\frac{\alpha}{2}}_{N+1}(x)$. We can see the errors at those points are significantly smaller than the global maximal error. Furthermore, the performance of superconvergence points of the collocation method is much better than that of the Petrov-Galerkin method. Errors at superconvergence points for the collocation method are more closer to zeros than the Petrov-Galerkin case as demonstrated by Fig. 5.6, the reciprocal of (\ref{ratio1}) ratios with $O(N^{-3})$ and $O(N^{-4})$ as reference slopes. We observe that the convergence rate at superconvergence points for the collocation method is about $O(N^{-3})$ better than the optimal global rate. One possible reason is that the interpolation points and superconvergence points of $\alpha^{th}$ Riesz derivative are identical.

\section{Concluding Remarks}

In this work, we investigated superconvergence for $u-u_N$ under Riesz fractional derivatives. We identified superconvergence points and found the improved convergence rate at those points. When $0<\alpha<1$, we consider $u_N(x)$ as either polynomial interpolation or GJF fractional interpolation, the improvement in convergence rates are $O(N^{-2})$ and $O(N^{-\frac{\alpha+3}{2}})$, respectively. When $\alpha >1$, only the GJF fractional interpolant is discussed due to the singularity, and the improvement in the convergence rate is $O(N^{-2})$. In particular, when $0<\alpha<2$, for the case of GJF fractional interpolation, the superconvergence points are the same as the interpolation points. In addition, when we apply our superconvergence knowledge to  the numerical solution of model FDEs, our theory  predicts accurately the locations of superconvergence points.
Moreover, we notice that for the Petrov-Galerkin method, the convergence improvement at the superconvergence points is only $O(N^{-1})$, which is inferior to $O(N^{-2})$, the improvement for the interpolation; and for the spectral collocation method, the convergence improvement at the superconvergence points is $O(N^{-3})$ to $O(N^{-4})$, which is superior to $O(N^{-2})$, the improvement for the interpolation.

It seems that polynomial-based interpolation plays only a limited role in solving FDEs. We believe that GJF-type fractional interpolation is to be preferred in fractional calculus. We hope that our findings can be useful in numerically solving FDEs, especially when using data at the predicted superconvergence points.

\end{document}